\newcommand{\F}{\mathsf F}
\renewcommand{\L}{\mathsf L}
\newcommand{\Q}{\mathbf Q}
\renewcommand{\R}{\mathbf R}
\newcommand{\Jimm}{{\adjustbox{scale=.8, raise=1ex, trim=0px 0px 0px 7px, padding=0ex 0ex 0ex 0ex}{\RL{j}}}}
\newcommand{\Z}{\mathbf Z}
\newcommand{\C}{\mathbf C}
\DeclareMathOperator{\cds}{cds}
\DeclareMathOperator{\con}{con}
\DeclareMathOperator{\num}{num}
\newcommand{\psl}{ \mathsf{PSL}_2 (\Z)    }
\newcommand{\pgl}{ \mathsf{PGL}_2 (\Z)   }
\definecolor{green}{RGB}{117, 165, 50}
\newcommand{\sherh}[1]{\fboxsep=0pt\setlength{\fboxrule}{1pt}
\begin{center}
   \fbox{\colorbox{green}{
         \begin{minipage}[t]{17cm}
            #1
         \end{minipage}
      }
   }
\end{center}}
\newcommand{\sherhh}[1]{\fboxsep=0pt\setlength{\fboxrule}{1pt}
\begin{center}
   \fbox{\colorbox{yellow}{
         \begin{minipage}[t]{17cm}
            #1
         \end{minipage}
      }
   }
\end{center}}
\newcommand{\sherhhh}[1]{\fboxsep=0pt\setlength{\fboxrule}{1pt}
\begin{center}
   \fbox{\colorbox{red}{
         \begin{minipage}[t]{17cm}
            #1
         \end{minipage}
      }
   }
\end{center}}
\renewcommand{\sherh}[1]{}\renewcommand{\sherhh}[1]{}
\renewcommand{\sherhhh}[1]{}
\begin{document}

\title{The Conumerator and the Codenominator
}

\titlerunning{The Conumerator and the Codenominator}        

   \author{ A. Muhammed Uluda\u{g}   \and   Buket Eren G\"{o}kmen  }

\authorrunning{B. Eren G\"{o}kmen \and  A. M. Uluda\u g} 

\institute{Buket Eren G\"{o}kmen  
	\email{bktern@hotmail.com} 
	\at Graz University of Technology, Institute of Analysis and Number Theory, Gratz, Austria
	\and
      A. Muhammed Uluda\u{g}   	\email{muhammed.uludag@gmail.com}  \at
Department of Mathematics, Galatasaray University, \.{I}stanbul, Turkey \\
}

\date{Received: date / Accepted: date}

\maketitle

\begin{abstract}
In this paper, we answer the question: ``{what is the $q$th Fibonacci number, where $q$ is a positive rational?}". The answer is the codenominator function, which is an integral-valued map. It is defined via a pair of functional equations. 
Many Fibonacci identities in the literature are valid for the codenominator. It is related to the recently introduced involution Jimm and to the outer automorphism of PGL(2,Z).
\keywords{Fibonacci numbers \and Continued fractions\and Diophantine equations  \and Jimm involution \and Real quadratic irrationals \and Outer automorphism of PGL(2,Z)}
 \subclass{11A55 \and 11A25  \and 11B39 \and 11J06  \and }
\end{abstract}

\section{Introduction}\label{intro}
Consider the \emph{numerator  function} $\num:\Q_{>0}\to \Z_{>0}$ defined as
\begin{equation*}
\num: x=(p,q)\in \Q_{>0}\to \num(x)=p \in \Z_{>0},
\end{equation*}
where $p, q>0,$ $ \gcd(p,q)=1$. It satisfies the functional equations
\begin{eqnarray}\label{numerator}
\num(1+x)=\num(x)+\num(1/x), \label{nmr111}\\
\num\left(\frac{x}{1+x}\right)=\num(x) \label{nmr222}
\end{eqnarray}
\sherhh{
$$
(f|T)=(f|I)+(f|U), \quad (f|UTU)=(f|I)
$$
One can consider these functional equations as equations for a map $\pgl\to V$, where $V$ is some vector space.
The map (which is well-defined up to a sign)
$$
f:\frac{px+q}{rx+s}\to q
$$
}
\noindent
subject to the condition $\num(1):=1$.
These determine the $\num$ function completely on $\Q_{>0}$, and it satisfies the additional equation $\num(x)=x\num(1/x)$.

\sherhh{Note that one has the equations
$$
\num(x-1)=\num(x)-\num(1/x),\\
\num\left(\frac{x}{x-1}\right)=\num(x) 
$$
and for now I don't know how to deduce these equations from the above ones. HW. do this? Do you have similar equations for the conumerator?
}
\par
The \emph{conumerator} is the function $\Q_{>0}\to \Z_{>0}$ 
defined by the system
\begin{align}
\con(1+x)&=\con(x)+\con(1/x), \label{cnm1}\\
\con\left(\frac{1}{1+x}\right)&=\con(x) \label{cnm2}
\end{align}
with $\con(1):=1$.  
\sherhh{
$$
(f|T)=(f|I)+(f|U), \quad (f|UT)=(f|I) \iff
$$
$$
(f|T)=(f|I)+(f|U), \quad (f|UTU)=(f|U) 
$$
}
Setting $y:=x+1>0$ we may rewrite this system as
\begin{eqnarray}\label{conumerator22}
\con(y)=\con(y-1)+\con\left(\frac{1}{y-1}\right), \quad
\con\left({1}/{y}\right)=\con(y-1).
\end{eqnarray}
If we express $y\in \Q_{>0}$ as a finite continued fraction 
$y=[n_0,n_1, \cdots, n_k]$ and set $\ell(y):=n_0+n_1+ \cdots+n_k$, then 
since both equations express $\con(y)$ and $\con(1/y)$  in terms of $\con$-values of rational numbers with 
$\ell$-value $\ell(y)-1<\ell(y)$, equations are consistent and $\con(y)$ can be computed in terms of $\con(1)$. 

The {\it codenominator} function $\F:\Q_{>0}\to \Z_{>0}$ is defined as $\F(x):=\con(1/x)$.  Thus, it is defined  by the system
\begin{align}
\F(1+1/x)&=\F(x), \label{cdnm1}\\
\F\left(\frac{1}{1+x}\right)&=\F(x)+\F(1/x) \label{cdnm2}
\end{align}
with $\F(1):=1$.  Computing $\F(x+2)$ by (\ref{cdnm1}-\ref{cdnm2}) we get 
\begin{align}\label{recurse}
\F(x+2)=\F(x+1)+\F(x),
\end{align}
so $\F$  extends the Fibonacci sequence to $\Q_{>0}$; i.e. $(\F(n))_{n\in \Z_{>0}}$ is the sequence defined by   
$
F_{n+1}=F_n+F_{n-1},
$
with $F_1=F_2=1$. 
 Since $\con(x)=\F(1/x)=\F(1+x)$, we see that $(\con(x))_{n\in \Z_{>0}}$ is the Fibonacci sequence shifted by one, which is preferred in combinatorial interpretations.

One can express the numerator in terms of $\F$ (Thm.~\ref{express1}). In \ref{codiscriminantt}   we introduce a 2-periodic function $\cds:\Q_{>0}\to \Z$. We then generalize some classical Fibonacci identities to $\F$ (Thm.~\ref{tag}).

In Cor.~\ref{prop2}, we relate $\F$ to the
recently discovered involution $\Jimm$ (``{jimm}") and show that $\F$ (as well as $\con$) takes every positive integral value infinitely often.
$\Jimm$ transforms non-noble real quadratic irrationals to real quadratic irrationals, 
simplifying considerably the Markov irrationals (\cite{eren}). 

Maple codes to compute the functions introduced here are available upon request.

\sherhh{An extension to $\Z^2$:
We may consider the following map:
$$
f:(m,n)\in \Z_{>0}^2 \to \gcd(m,n)(\con(m/n), \con(n/m))\in \Z_{>0}^2 
$$
This gives another way to enumerate the lattice. One has
$$
f(m+n,n)=\gcd(m+n,n)(\con\frac{m+n}{n}, \con\frac{n}{m+n}), 
=\gcd(m,n)(\con(1+\frac{m}{n}), \con\frac{1}{1+m/n}), 
$$
$$
=\gcd(m,n)(\con\frac{m}{n}+\con\frac{n}{m}, \con\frac{m}{n})=f(n,m)+\gcd(m,n)(\con\frac{m}{n},0)
$$
$$
f(m,m+n)=\gcd(m,n+m)(\con\frac{m}{m+n}, \con\frac{m+n}{m}),
=\gcd(m,n)(\con\frac{1}{1+n/m}, \con(1+\frac{n}{m}) ), 
$$
$$
=\gcd(m,n)( \con\frac{n}{m},\con\frac{m}{n}+\con\frac{n}{m})=f(n,m)+\gcd(m,n)(\con\frac{n}{m},0)
$$
$$
\implies
f(m+n,n)+f(m,m+n)=2f(n,m)+f(m,n)
$$
}
\section{Basic properties of the codenominator}
Set $F_0:=0$ and $F_{-n}:=(-1)^{n+1}F_n$ for $n>0$, where $F_n$ is the $n$th Fibonacci number.
\begin{proposition}\label{fes}
(i) For $x\in (0,1)\cap \Q$, one has $\F(x)=\F(1-x)$. \\
(ii) For $n\in \Z$ and $x\in \Q_{>0}$ such that $n+x>0$
\begin{eqnarray}\label{conums}
\F(n+x)=F_{n}\F(1+x)+F_{n-1}\F(x).\label{conums2}
\end{eqnarray}
(iii) For $n\in \Z_{>0}$ and $x\in \Q_{>0}$
\begin{eqnarray}\label{conumss}
\F\left(\frac{F_{n}+F_{n+1}x}{F_{n-1}+F_{n}x}\right)=\F(x)
\label{conums3}
\end{eqnarray}
\end{proposition}
\begin{proof} 
(i) The right hand side of (\ref{cdnm1}) is invariant under $x\leftrightarrow 1/x$. As for the left hand side 
$$
\frac{1}{1+x}+\frac{1}{1+1/x}=1
\implies 
\F\left(\frac{1}{1+x}\right)=\F\left(\frac{x}{1+x}\right)
$$
(ii) The cases $n=0,1$ are easily checked and the case $n=2$ reduces to (\ref{recurse}).
Now suppose (\ref{conums2}) is true for $n$. Then use (\ref{cdnm1}) and (\ref{cdnm2}) to get
\begin{align*}
\F(n+1+x)&=F_{n}\F(x+2)+F_{n-1}\F(x+1)\\
&=F_{n+1}\F(x+1)+F_{n}\F(x),
\end{align*}
so (\ref{conums2}) is true for $n+1$ as well. This settles the case $n\geq 0$, and the case $n<0$ is proved similarly.\\
(iii) This is just an iteration of (\ref{cdnm1}). 
\qed\end{proof} 
By Proposition \ref{fes}, if $x=[n_0,n_1,\dots,n_k]$ is given as a continued fraction, then:
\begin{align*}
\F([n_0,\dots,n_k]) &=F_{n_0}\F([1,n_1,\dots,n_k])+F_{n_0-1}\F([0,n_1,\dots,n_k]) \\
&=F_{n_0}\F([n_1,n_2,\dots,n_k])+F_{n_0-1}\F([n_1+1,\dots,n_k])
\implies \\
\F([n_0,n_1])&=
F_{n_0}F_{n_1}+F_{n_0-1}F_{n_1+1}\\
\F([n_0,n_1,n_2])&=
F_{n_0}(F_{n_1}F_{n_2}+F_{n_1-1}F_{n_2+1}
)+F_{n_0-1}(F_{n_1+1}F_{n_2}+F_{n_1}F_{n_2+1}).
\end{align*}
Invoking Binet's formula, one can express $\F([n_0, n_1,\dots, n_k])$ in terms of the golden ratio $\varphi:=(1+\sqrt{5})/2$, for example
\begin{align*}
\F([n_0,n_1])
=\frac{\varphi^{n_0}-\bar\varphi^{n_0}}{\varphi-\bar\varphi}
\frac{\varphi^{n_1}-\bar\varphi^{n_1}}{\varphi-\bar\varphi}
+
\frac{\varphi^{n_0-1}-\bar\varphi^{n_0-1}}{\varphi-\bar\varphi}
\frac{\varphi^{n_1+1}-\bar\varphi^{n_1+1}}{\varphi-\bar\varphi}.
\end{align*}
In particular for $n=1,2,\dots$
$$
\F(n+1/2)=F_nF_2+F_{n-1}F_3=F_{n-1}+F_{n+1} \mbox{ (OEIS:A000204)}
$$ is the Lucas sequence. Since $\F({x+1})=\F(1/x)$ and $\F({x-1})=\F(1/x)-\F(x)$, we define the Lucas function on $\Q_{>0}$ as
$
\L(x):=2\F(1/x)-\F(x)
$.

\sherhh{
\begin{align*}
=F_{n_0+1}(F_{n_1}\con([n_2-1,n_3,\dots,n_k])+F_{n_1-1}\con([n_2,\dots,n_k]))\\
+F_{n_0}(F_{n_1+1}\con([n_2-1,n_3,\dots,n_k])+F_{n_1}\con([n_2,\dots,n_k]))
\end{align*}
\begin{align*}
=(F_{n_0+1}F_{n_1}+F_{n_0}F_{n_1+1})\con([n_2-1,n_3,\dots,n_k])+
(F_{n_0+1}F_{n_1-1}+F_{n_0}F_{n_1}) \con([n_2,\dots,n_k]))
\end{align*}
\begin{align*}
=(F_{n_0+1}F_{n_1}+F_{n_0}F_{n_1+1})
(F_{n_2}\con([n_3-1,n_2,\dots,n_k])+F_{n_2-1}\con([n_3,\dots,n_k]))+\\
(F_{n_0+1}F_{n_1-1}+F_{n_0}F_{n_1}) 
(F_{n_2+1}\con([n_3-1,n_2,\dots,n_k])+F_{n_2}\con([n_3,\dots,n_k]))
\end{align*}
\begin{align*}
=(F_{n_2}(F_{n_0+1}F_{n_1}+F_{n_0}F_{n_1+1})+F_{n_2+1}(F_{n_0+1}F_{n_1-1}+F_{n_0}F_{n_1}))
\con([n_3-1,n_2,\dots,n_k]+\\
(F_{n_2-1}(F_{n_0+1}F_{n_1}+F_{n_0}F_{n_1+1})+F_{n_2}(F_{n_0+1}F_{n_1-1}+F_{n_0}F_{n_1}))
\con([n_3,\dots,n_k]))
\end{align*}}
\sherhh{
And there is a Binet formula, involving sums of $\pm$ of partial fractions...
$$
\con([n_0,\dots,n_k]) =
\frac{\varphi^{n_0+1}-\bar\varphi^{n_0+1}}{\varphi-\bar\varphi}\con([n_1-1,n_2,\dots,n_k])+
\frac{\varphi^{n_0}-\bar\varphi^{n_0}}{\varphi-\bar\varphi}\con([n_1,\dots,n_k])=
$$
\begin{align*}
\frac{\varphi^{n_0+1}-\bar\varphi^{n_0+1}}{\varphi-\bar\varphi}
\left(
\frac{\varphi^{n_1}-\bar\varphi^{n_1}}{\varphi-\bar\varphi}\con([n_2-1,n_3,\dots,n_k])+
\frac{\varphi^{n_1-1}-\bar\varphi^{n_1-1}}{\varphi-\bar\varphi}\con([n_2,\dots,n_k])
\right)+\\
\frac{\varphi^{n_0}-\bar\varphi^{n_0}}{\varphi-\bar\varphi}
\left(
\frac{\varphi^{n_1+1}-\bar\varphi^{n_1+1}}{\varphi-\bar\varphi}\con([n_2-1,n_3,\dots,n_k])+
\frac{\varphi^{n_1}-\bar\varphi^{n_1}}{\varphi-\bar\varphi}\con([n_2,\dots,n_k])
\right)
\end{align*}
something like this:
\begin{align*}
\con([n_0,n_1])=F_{n_0+1}F_{n_1}+F_{n_0}F_{n_1+1}
=\frac{\varphi^{n_0+1}-\bar\varphi^{n_0+1}}{\varphi-\bar\varphi}
\frac{\varphi^{n_1}-\bar\varphi^{n_1}}{\varphi-\bar\varphi}
+
\frac{\varphi^{n_0}-\bar\varphi^{n_0}}{\varphi-\bar\varphi}
\frac{\varphi^{n_1+1}-\bar\varphi^{n_1+1}}{\varphi-\bar\varphi}
\end{align*}
\begin{align*}
({\varphi-\bar\varphi})^2\con([n_0,n_1])=\\
\varphi^{n_0+n_1+1}-
\varphi^{-n_0+n_1-1}-
\varphi^{n_0-n_1+1}+
\varphi^{-n_0-n_1-1}+
\varphi^{n_0+n_1+1}-
\varphi^{-n_0+n_1+1}-
\varphi^{n_0-n_1-1}+
\varphi^{-n_0-n_1-1}
\end{align*}
\begin{align*}
=
2\varphi^{n_0+n_1+1}-
\varphi^{-n_0+n_1-1}-
\varphi^{n_0-n_1+1}-
\varphi^{-n_0+n_1+1}-
\varphi^{n_0-n_1-1}+
2\varphi^{-n_0-n_1-1}
\end{align*}
}
Setting $x=1$ in (\ref{conums3}) and in (\ref{conums2}) we get 
\begin{corollary}\label{cor1} For $n\in \Z_{>0}$ 
\begin{eqnarray*}
\F\left(\frac{F_n}{F_{n+1}}\right)=n, \quad
\F\left(\frac{F_{n+1}}{F_{n}}\right)=1, \quad
\F\left(\frac1n\right)=\F\left(1+n\right)=F_{n+1}.
\end{eqnarray*}
\end{corollary}
One has
\begin{eqnarray*}
\F\left(n+{1}/{3}\right)=2F_{n+1}+F_{n-1} \mbox{ (OEIS:A001060)}\\
\F\left(n+{1}/{4}\right)=3F_{n+1}+2F_{n-1} \mbox{ (OEIS:A022121)}\\
\F\left(n+{1}/{5}\right)=5F_{n+1}+3F_{n-1} \mbox{ (OEIS:A022138)}\\ 
\F\left(n+{1}/{n}\right)=2F_{n}^2+(-1)^n \mbox{ (OEIS:A061646)}\\
\F([n,n,n])
=F_n( 4F_{n}^2+3(-1)^n)
\end{eqnarray*}

Finally, let us give some sporadic computations:
\begin{eqnarray*}
\F\left({23}/{27}\right)=97 \mbox{ (prime)},\quad 
\F\left({27}/{23}\right)=37 \mbox{ (prime) }\\
\F\left({19}/{41}\right)=2^4\times 7\quad 
\F\left({41}/{19}\right)=3^4
\end{eqnarray*}
See Appendix I for the list of values $\F(41/k)$,  $k=1,2,..200$.

\medskip
\noindent
{\bf Problem.} Characterize those rationals with a prime codenominator.

\begin{theorem} \label{express1}
The numerator is expressed in terms of the codenominator as:
\begin{equation}\label{express}
\num(x)=\F\left(\frac{\F(x)}{\F(1/x)}\right)
\end{equation}
\end{theorem}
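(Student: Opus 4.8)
The plan is to exploit the uniqueness built into the defining system for $\num$: since equations (\ref{nmr111})--(\ref{nmr222}) together with $\num(1)=1$ determine $\num$ completely on $\Q^+$, it suffices to check that the right-hand side of (\ref{express}) obeys the very same recursion. So I would introduce
\[
G(x):=\con\!\left(\frac{\con(x)}{\con(1/x)}\right), \qquad x\in\Q^+,
\]
observe that $G:\Q^+\to\Z^+$ is well defined because $\con$ takes positive integer values, and then aim to prove the three statements $G(1)=1$, $G(1+x)=G(x)+G(1/x)$ and $G\big(x/(1+x)\big)=G(x)$. Granting these, uniqueness of the solution to the numerator system forces $G=\num$, which is exactly the assertion (\ref{express}).

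The engine of the argument is to track how the ordered pair $\big(\con(x),\con(1/x)\big)$ transforms under the three elementary moves $x\mapsto 1/x$, $x\mapsto 1+x$ and $x\mapsto x/(1+x)$. Writing $u=\con(x)$ and $v=\con(1/x)$, inversion $x\mapsto 1/x$ simply swaps the pair to $(v,u)$; the defining equations (\ref{cnm1})--(\ref{cnm2}) give $\con(1+x)=u+v$ and $\con\big(1/(1+x)\big)=u$, so $x\mapsto 1+x$ sends the pair to $(u+v,u)$; and since $x/(1+x)=1/(1+1/x)$, applying (\ref{cnm1})--(\ref{cnm2}) with $1/x$ in place of $x$ sends the pair to $(v,u+v)$. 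These are precisely the Fibonacci-type linear actions, and they are all that is needed.

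With $z:=u/v=\con(x)/\con(1/x)$ the three moves translate directly into the desired identities. For the additive equation, the swap gives $G(1/x)=\con(v/u)=\con(1/z)$ while $x\mapsto 1+x$ gives $G(1+x)=\con\big((u+v)/u\big)=\con(1+1/z)$; one application of (\ref{cnm1}) to the argument $1/z$ then yields $\con(1+1/z)=\con(1/z)+\con(z)=G(1/x)+G(x)$. For the invariance equation, $x\mapsto x/(1+x)$ gives $G\big(x/(1+x)\big)=\con\big(v/(u+v)\big)=\con\big(1/(1+z)\big)$, and a single use of (\ref{cnm2}) collapses this to $\con(z)=G(x)$. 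Finally $G(1)=\con(\con(1)/\con(1))=\con(1)=1$, completing the verification.

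The computation is short once the pair $\big(\con(x),\con(1/x)\big)$ is adopted as the basic object, so I do not expect a serious analytic obstacle; the one point demanding care is conceptual rather than technical. I must keep in mind that (\ref{cnm1})--(\ref{cnm2}) are identities valid for \emph{every} argument in $\Q^+$, so they may be applied to the rational $z=u/v$ directly, without first reducing it to lowest terms or worrying whether $u$ and $v$ are coprime. Keeping the three generator actions consistently oriented, in particular not conflating the move $x\mapsto x/(1+x)$ with $x\mapsto 1/(1+x)$, is the single place where a slip could plausibly creep in.
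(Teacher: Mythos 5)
Your proof is correct and takes essentially the same route as the paper's: both verify that the right-hand side satisfies the two defining functional equations of $\num$ together with the value $1$ at $x=1$, and conclude by uniqueness. Your bookkeeping via the pair $\bigl(\con(x),\con(1/x)\bigr)$ and the variable $z=\con(x)/\con(1/x)$ is just a tidier organization of the identical computations.
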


\begin{proof} 
Denote the right hand side of (\ref{express}) by $f(x)$. Since $f(1)=1=\num(1)$, it suffices to prove that $f$ satisfies~(\ref{nmr111}-\ref{nmr222}). It satisfies~(\ref{nmr111}) since
\begin{eqnarray*}
f(x+1)=\F\left(\frac{\F(x+1)}{\F(1/(x+1))}\right)
=\F\left(\frac{\F(1/x)}{\F(x)+\F(1/x)}\right)\\
=\F\left(\frac{1}{1+\frac{\F(x)}{\F(1/x)}}\right)
=\F\left(\frac{\F(x)}{\F(1/x)}\right)+
\F\left(\frac{\F(1/x)}{\F(x)}\right)=
f(1/x)+f(x),
\end{eqnarray*}
As for~(\ref{nmr222}),
\begin{eqnarray*}
f\left(\frac{x}{x+1}\right)=\F\left(\frac{\F(\frac{x}{x+1})}{\F(\frac{1}{\frac{x}{x+1}})}\right)=
\F\left(\frac{\F(\frac{1}{1+1/x})}{\F({{1+1/x}})}\right)=\\
\quad\quad\quad\quad\F\left(\frac{\F(x)+\F(1/x)}{\F(x)}\right)=
\F\left( \frac{\F(x)}{\F(1/x)} \right)
=f(x).\quad\quad\quad\quad\qed
\end{eqnarray*}
\end{proof}
\begin{corollary}\label{crrr}
The codenominator satisfies
\begin{eqnarray*}
\F
\left(\frac{\F(x)}{\F({1}/{x})}\right)=
x\F\left(\frac{\F({1}/{x})}{\F(x)}\right)\label{involut0}
\end{eqnarray*}
\end{corollary}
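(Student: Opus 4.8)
The plan is to read both sides of (\ref{involut0}) as values of the numerator via Theorem~\ref{express1}, and then to close the identity using the reciprocity relation $\num(x)=x\,\num(1/x)$ recorded in the introduction. The left-hand side of (\ref{involut0}) is literally the right-hand side of (\ref{express}), so by Theorem~\ref{express1} it equals $\num(x)$ outright, with no computation needed.

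For the remaining term I would apply Theorem~\ref{express1} a second time, now with $1/x$ substituted for $x$. Since $\con(1/(1/x))=\con(x)$, the formula (\ref{express}) gives
\begin{equation*}
\num(1/x)=\con\!\left(\frac{\con(1/x)}{\con(1/(1/x))}\right)=\con\!\left(\frac{\con(1/x)}{\con(x)}\right),
\end{equation*}
which is exactly the $\con$-expression appearing on the right-hand side of (\ref{involut0}). Hence the right-hand side equals $x\,\num(1/x)$.

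Combining the two identifications, (\ref{involut0}) reduces to the assertion $\num(x)=x\,\num(1/x)$. This is immediate from the definition $\num(p/q)=p$ with $\gcd(p,q)=1$: writing $x=p/q$ one has $\num(1/x)=\num(q/p)=q$, so $x\,\num(1/x)=(p/q)\cdot q=p=\num(x)$. Assembling the chain $\con(\con(x)/\con(1/x))=\num(x)=x\,\num(1/x)=x\,\con(\con(1/x)/\con(x))$ yields the claim.

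I do not anticipate any genuine obstacle here: the corollary is a purely formal consequence of Theorem~\ref{express1} applied at $x$ and at $1/x$, combined with an elementary property of $\num$. The only point requiring a moment's care is the substitution $x\mapsto 1/x$ together with the simplification $\con(1/(1/x))=\con(x)$, which guarantees that the second application of (\ref{express}) produces precisely the reciprocal $\con$-ratio rather than some unrelated expression; once that is checked, the two invocations of the theorem fit together without friction.
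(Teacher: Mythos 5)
Your proposal is correct and is exactly the derivation the paper intends: the paper simply states that the corollary follows ``from (\ref{express})'', i.e.\ from applying Theorem~\ref{express1} at $x$ and at $1/x$ and invoking the identity $\num(x)=x\num(1/x)$ noted in the introduction. Your write-up fills in those steps faithfully, including the small but necessary check that $\con(1/(1/x))=\con(x)$.
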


\subsection{The codiscriminant}\label{codiscriminantt}
The {\it codiscriminant} function is defined for $x\in \Q_{>0}$ as
\begin{align}
\cds(x)&:=\F(1/x)^2-\F(x)\F(1/x)-\F(x)^2\label{cdss} \\
&=\left(\F(1/x)- \varphi \F(x)\right)\left(\F(1/x)-\bar\varphi \F(x)\right)\nonumber\\
&=\F(x+1)^2-\F(x)\F(x+1)-\F(x)^2\nonumber\\
&=\F(x+1)^2-\F(x)\F(x+2)\nonumber
\end{align}
\begin{lemma}\label{transinv}
For $x\in \Q_{>0}$ one has $\cds(1+x)=-\cds(x)$. Hence, $\cds$ is 2-periodic on $\Q_{>0}$.
\end{lemma}
\begin{proof} Use (\ref{cdss}) to get 
\begin{align*}
\cds(1+x)
&=
\left(\F(1/(1+x))- \varphi \F(1+x)\right)\left(\F(1/(1+x))-\bar\varphi \F(1+x)\right)\\
&=
\left(\F(x)+\F(1/x)- \varphi \F(1/x)\right)\left(\F(x)+\F(1/x)-\bar\varphi \F(1/x)\right)\\
&=
\left(\F(x)+(1-\varphi) \F(1/x)\right)\left(\F(x)+(1-\bar\varphi) \F(1/x)\right)
\end{align*}
Now use $1-\varphi=-1/\varphi$, $1-\bar \varphi=-1/\bar\varphi$ and $\varphi\bar\varphi=-1$.\qed
\end{proof}
For $x=n\in \Z_{>0}$ this reduces to the Cassini identity
$$
\cds(n)=F_{n+1}^2-F_{n+1}F_{n}-F_{n}^2=(-1)^n.
$$
Similarly, 
$
\cds\left(n+1/k\right)
 =(-1)^n\cds\left(1/k\right) \nonumber
$
gives the codenominator identity
\begin{eqnarray*}
(F_{n+1}F_k+F_{k+1}F_n)^2-(F_{n+1}F_k+F_{k+1}F_n)(F_{n}F_k+F_{k+1}F_{n-1})
\\-(F_{n}F_k+F_{k+1}F_{n-1})^2=(-1)^{n}\cds(1/k).
\end{eqnarray*}
Here, $(-\cds(1/k)=\F(1+k)\F(2+k)-\F(k)^2)$ is the sequence
$$
1, 1, 5, 11, 31, 79, 209, 545, 1429, 3739, 9791 \dots \mbox{ (OEIS:A236428)}.
$$

\begin{lemma}\label{transinv2}
For $x\in (0,1)\cap \Q$ one has $\cds(1-x)=\cds(x)$. Hence,
$\cds(n-x)=(-1)^{n+1}\cds(x)$ for $n>x$, $n\in \Z$.
\end{lemma}
\begin{proof}
Note that $(x/(1+x))+(1/(1+x))=1$.
So we check $\cds(x/(1+x))=\cds(1/(1+x))$ for all $x\in \Q_{>0}$:
\begin{eqnarray*}
\cds\left(\frac{x}{1+x}\right) = 
\F^2\left(\frac{1+x}{x}\right)-
\F\left(\frac{x}{1+x}\right) \F\left(\frac{1+x}{x}\right)
- \F^2\left(\frac{x}{1+x}\right) \\
=
\F^2(x)-(\F(x)+\F(1/x))\F(x)-(\F(x)+\F(1/x))^2\\
=
-\F^2(1/x)-3\F(x)\F(1/x)-\F^2(x)
\end{eqnarray*}
\begin{eqnarray*}
\cds\left(\frac{1}{1+x}\right) = 
\F^2\left({1+x}\right)-
\F\left(1+x\right) \F\left(\frac{1}{1+x}\right)
- \F^2\left(\frac{1}{1+x}\right) \\
=\F^2(1/x)-\F(1/x)(\F(x)+\F(1/x))-(\F(x)+\F(1/x))^2\\
=\quad\quad\quad\quad-\F^2(1/x)-3\F(x)\F(1/x)-\F^2(x)\quad\quad\qed
\end{eqnarray*}
Iterating the proof of the lemma gives
\begin{eqnarray*}
\cds\left(\frac{F_{n-1}x+F_n}{F_nx+F_{n+1}}\right)=
-\bigl((n^2+n-1)\F^2(1/x)+(2n+1)\F(1/x)\F(x)+\F(x)^2\bigr)\\
\end{eqnarray*}
\end{proof}
%
%
The function $\cds$ is therefore invariant up to sign under the transformations 
$n+x$ and $n-x$ forming a sort of dihedral semigroup, with 
the interval $(0,1/2]$ as its fundamental region. It is not injective on this region; e.g.
$\cds(4/15)=\cds(29/76)=-319$.

\section{Fibonacci identities}
Among the myriad Fibonacci identities in the literature~\cite{koshy}, many admit a codenominator interpretation. The idea is to replace an integer $n$ by a rational $q$ and $(-1)^n$ by $\cds(q)$ in the formula. Here are some examples.
\begin{theorem}\label{tag}
If at least two among $p,q,r \in \Q_{>0}$ are integral, then
\begin{eqnarray}\label{taguiri}
\F({p+q})\F({p+r})-\F(p)\F({p+q+r})=
\cds(p)\F({q})\F(r)
\end{eqnarray}
(reduces to Taguiri's identity when $p, q, r \in \Z$).
\end{theorem}
\begin{proof}
If $p,q\in\Z_{>0}$, then 
\begin{eqnarray*}
\F({p+q})\F({p+r})-\F(p)\F({p+q+r})\\
=F_{p+q}(F_p\F(r+1)+F_{p-1}\F(r))-
F_p(F_{p+q}\F(r+1)+F_{p+q-1}\F(r))\\
=(F_{p-1}F_{p+q}-F_pF_{p+q-1})\F(r))\\
(*) =(-1)^{p+q-1}F_{-q}\F(r)=(-1)^{p+q-1}(-1)^{q+1}F_q\F(r)\\
=\cds(p)\F(q)\F(r)
\end{eqnarray*}
where in step $(*)$ we used d'Ocagne's identity: 
$F_sF_{n+1}-F_{s+1}F_n=(-1)^nF_{s-n}$ valid for $s,n\in \Z$.

If $q,r\in\Z_{>0}$, then 
{\small \begin{eqnarray*}
\F({p+q})\F({p+r})-\F(p)\F({p+q+r})=\\
(F_q\F(1/p)+F_{q-1}\F(p))
(F_r\F(1/p)+F_{r-1}\F(p))
-\F(p)(F_{q+r}\F(1/p)+F_{q+r-1}\F(p))\\
=F_qF_r\F^2(1/p)
+(F_qF_{r-1}+F_rF_{q-1}-F_{q+r})\F(p)\F(1/p)+
(F_{q-1}F_{r-1}-F_{q+r-1})\F(p)^2.
\end{eqnarray*}
}
To finish the proof, use the Fibonacci identities 
$$
F_{q-1}F_{r-1}-F_{q+r-1}=F_qF_{r-1}+F_rF_{q-1}-F_{q+r}=-F_qF_r
\qquad \qed
$$
\end{proof}
Setting $n=q=r$ to be integral and $p+q=p+r=s$ in (\ref{taguiri}) gives
\begin{corollary}
For $s\in \Q_{\geq0}$, $n\in \Z_{\geq 0}$, $s\geq n$ one has 
$
\F(s)^2-\F({s+n})\F({s-n})=\cds(s+n)\F^2(n)
$
(reduces to Catalan's identity when $s\in \Z$).
\end{corollary}
Setting $r=1$, $n=p$, $s=p+q$ in (\ref{taguiri}) gives
\begin{corollary}
For $s\in \Q_{\geq0}$, $n\in \Z_{>0}$, $s\geq n$ one has
$
\F(s)\F({n+1})-\F({s+1})\F(n)=\cds(n)\F({s-n})
$
(reduces to d'Ocagne's identity when $s\in \Z$)
\end{corollary}
Here is a Lucas identity which follows directly from definitions:
\begin{proposition}
For $q\in \Q_{>0}$ one has 
$\L(q)^2-5\F(q)^2=4\cds(q)$.
\end{proposition}
Not every Fibonacci identity has a straightforward codenominator version: e.g. we failed\footnote{On the other hand, the formula $\F(n+1)\F(q+1)-\F(n-1)\F(q-1)=\F(q+n)$ is valid.} to interpret  
$F_{n+1}^2-F_{n-1}^2=F_{2n}$. Experiments suggest that 
$(\F^2(n+q+1)-\F^2(n+q-1)/\F(2n+2q)$ tends to a limit in 
$\Q(\sqrt{5})$ as $n\to\infty$.

%
%

\sherhh{

Generating Functions

Let $q\in (0,1]$. Set
$$
s_q(x):=\sum_{n=0}^\infty \con(q+n)x^n
$$
Then
$$
s_q(x)=\con(q)+\con(q+1)+\sum_{n=2}^\infty (\con(q+n-1)+\con(q+n-2))x^n
$$
$$
=\con(q)+\con(q+1)+\sum_{n=2}^\infty \con(q+n-1)x^n+\sum_{n=2}^\infty \con(q+n-2)x^n
$$
$$
=\con(q)+\con(q+1)x+x\sum_{n=1}^\infty \con(q+n)x^n+x^2\sum_{n=0}^\infty \con(q+n)x^n
$$
$$
=\con(q)+\con(q+1)x+x(s_q(x)-\con(q)) +x^2 s_q(x)
$$
$$
\implies s_q(x)=\frac{\con(q)+x(\con(q+1)-\con(q))}{1-x-x^2}  =\frac{\con(q)+x\con(\frac1q)}{1-x-x^2} 
$$

Another try. Set (meaninglessly)

$$
s(x):=\sum_{q\in \Q_{>0}} \con(q)x^q\implies
$$
$$
s(x)=\sum_{q\in (0,1]} \sum_{n=0}^\infty \con(q+n)x^{q+n}
$$
$$
=\sum_{q\in (0,1]} \left(  \con(q)x^q+\con(q+1)x^{q+1}+ \sum_{n=2}^\infty \con(q+n)x^{q+n} \right)
$$
$$
=\sum_{q\in (0,1]} \left(  \con(\frac1q-1)x^q+\con(q)x^{q+1}+\con(\frac1q)x^{q+1}+ 
\sum_{n=2}^\infty (\con(q+n-1)+\con(q+n-2))x^{q+n} \right)
$$
One has
$$
\sum_{q\in (0,1]} \sum_{n=2}^\infty (\con(q+n-1)+\con(q+n-2))x^{q+n} =
xs(x)+x^2s(x)\implies
$$
$$
s(x)=xs(x)+x^2s(x)+
\sum_{q\in (0,1]}  \con(\frac1q-1)x^q+
\sum_{q\in (0,1]} \con(q)x^{q+1}+
\sum_{q\in (0,1]} \con(\frac1q)x^{q+1}
$$
$$
=xs(x)+x^2s(x)+
\sum_{q\in \Q_{>0}}  \con(q)x^{\frac{1}{q+1}}+
x\sum_{q\in (0,1]} \con(q)x^{q}+
\sum_{q\in [1,\infty]} \con(q)x^{\frac1q+1}
$$
$$
=xs(x)+x^2s(x)+
\sum_{q\in \Q_{>0}}  \con(q)x^{\frac{1}{q+1}}+
x\sum_{q\in (0,1]} \con(q)x^{q}+
x\sum_{q\in [1,\infty]} \con(q)x^{\frac1q}
$$

Of course, you can not expect to have anything algebraic for $s(x)$ with this definition..

One might try a generating function with infinitely many variables, something like
$$
s(x)=\sum_{q\in \Q_{>0}} \con(q)x_0^{n_0}x_1^{n_1}\dots x_k^{n_k}
$$
where $q=[n_0,n_1,\dots, n_k]$. One may try this for the function $\num$ as well.
One might start with the functions
$$
s_0(x_0):=\sum \con([n_0])x_0^{n_0}, \quad s_1(x_0,x_1):=\sum \con([n_0,n_1])x_0^{n_0} x_1^{n_1} ,\quad etc
$$

%
}

\section{The involution Jimm}
\begin{definition}
The  function below is called Jimm:
$$ 
\Jimm: x\in \Q_{>0}\to \frac{\con(x)}{ \con(1/x)} 
=\frac{\F(1/x)}{\F(x)}\in \Q_{>0}
$$ 
\end{definition}

\begin{lemma}\label{lemma2} For all $x \in \Q_{>0}$ one has
	\begin{enumerate}
		\item [i.] $\Jimm(\Jimm(x))=x $  
		\item [ii.] \label{ii} $ \Jimm(1/x)=1/\Jimm(x)$
		\item [iii.]\label{iii} $ \Jimm(1+x)=1+1/\Jimm(x)$ $\iff$ $\Jimm(1+1/x)=1+\Jimm(x)$ 
		\item [iv.] $\Jimm(1-x)=1-\Jimm(x)$ $(0<x<1)$.
	\end{enumerate} 
\end{lemma}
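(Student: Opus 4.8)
The plan is to treat the four identities in the order (ii), (iii), (i), (iv), since the first two are formal and feed into the others, (i) is a direct reading of an already-established functional equation, and (iv) is the only genuinely computational point.

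First I would prove (ii) and (iii) directly from the definition $\Jimm(x)=\con(x)/\con(1/x)$ together with the defining equations of the conumerator. For (ii), since $\con(1/(1/x))=\con(x)$, the quotient simply inverts: $\Jimm(1/x)=\con(1/x)/\con(x)=1/\Jimm(x)$. For (iii), I would substitute (\ref{cnm1}) into the numerator and (\ref{cnm2}) into the denominator of $\Jimm(1+x)=\con(1+x)/\con(1/(1+x))$, obtaining $(\con(x)+\con(1/x))/\con(x)=1+1/\Jimm(x)$. The asserted equivalence with $\Jimm(1+1/x)=1+\Jimm(x)$ is then obtained by replacing $x$ with $1/x$ and using (ii) to turn $1/\Jimm(1/x)$ into $\Jimm(x)$.

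For (i), I would invoke Corollary~\ref{crrr}. The identity (\ref{involut0}) says precisely $\con(\Jimm(x))=x\,\con(1/\Jimm(x))$; dividing through by $\con(1/\Jimm(x))$ and recognising the left-hand side as $\Jimm(\Jimm(x))$ gives $\Jimm(\Jimm(x))=x$. Thus involutivity is essentially a restatement of the third functional equation, whose real content lies upstream in Theorem~\ref{express1}.

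The main obstacle is (iv), the only part that is not immediate. Here I would first record an auxiliary identity on the unit interval: for $t\in(0,1)$, setting $s=(1-t)/t$ so that $1/(1+s)=t$, equation (\ref{cnm2}) gives $\con(t)=\con((1-t)/t)$. Applying this at $t=x$ and at $t=1-x$ yields $\con(x)=\con((1-x)/x)$ and $\con(1-x)=\con(x/(1-x))$. Next, using (\ref{cnm1}) on $1/x=1+(1-x)/x$ and on $1/(1-x)=1+x/(1-x)$, I would expand both denominators and find the pleasant coincidence $\con(1/x)=\con(1/(1-x))=\con(x)+\con(1-x)$. Substituting back, $\Jimm(1-x)=\con(1-x)/(\con(x)+\con(1-x))$ while $1-\Jimm(x)=(\con(1/x)-\con(x))/\con(1/x)=\con(1-x)/(\con(x)+\con(1-x))$, so the two sides agree. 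The only care required is to verify that every argument produced ($x$, $1-x$, $(1-x)/x$, $x/(1-x)$) is positive, which holds throughout $0<x<1$, so the functional equations always apply and no degenerate case intervenes.
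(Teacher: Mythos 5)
Your proposal is correct, and parts (ii) and (iii) coincide with the paper's argument (substitute the defining equations of $\con$ into the numerator and denominator of $\Jimm$). For (i) you route through Corollary~\ref{crrr}, reading (\ref{involut0}) as $\con(\Jimm(x))=x\,\con(1/\Jimm(x))$ and dividing; the paper instead applies Theorem~\ref{express1} twice to write $\Jimm(\Jimm(x))=\num(x)/\num(1/x)=x$. Since Corollary~\ref{crrr} is itself just (\ref{express}) applied to $x$ and $1/x$, these are the same argument in two packagings. The genuine divergence is in (iv): the paper sets $t:=1/x-1$, so that $x=1/(1+t)$ and $1-x=1/(1+1/t)$, and then deduces $\Jimm(x)=\Jimm(t)/(1+\Jimm(t))$ and $\Jimm(1-x)=1/(1+\Jimm(t))$ from the already-proved (ii) and (iii), whereas you go back to the conumerator itself, using (\ref{cnm2}) to get $\con(x)=\con((1-x)/x)$ and $\con(1-x)=\con(x/(1-x))$ and then (\ref{cnm1}) to obtain the key identity $\con(1/x)=\con(1/(1-x))=\con(x)+\con(1-x)$, from which both sides of (iv) equal $\con(1-x)/(\con(x)+\con(1-x))$. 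Both computations check out; the paper's version is slightly more economical because it reuses (ii)--(iii) as a black box, while yours produces the reusable symmetric identity $\con(1/x)=\con(1/(1-x))$ on $(0,1)$, which is a nice byproduct not stated in the paper. Your closing check that all intermediate arguments lie in $\Q^+$ for $0<x<1$ is the right thing to verify and is indeed all that is needed.
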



\begin{proof} 
		i) Use (\ref{express}) to get:\\[-6mm]
		\begin{align*}
		\Jimm(\Jimm(x))=\frac{{ \F\Bigg(\dfrac{ \F(x)}{\F(1/x)}}\Bigg)}{{ \F\Bigg(\dfrac{ \F(1/x)}{\F(x)}}\Bigg)}
		=\frac{\num(x)}{\num(1/x)}=x.
		\end{align*}
		ii)
		$\Jimm(1/x)=\con(1/x) / \con(x)=1/ (\con(x) / \con(1/x))=1/\Jimm(x)$.\\
		iii)		
		$\Jimm(1+x)=\con(1+x)/\con(1/(1+x))=(\con(x)+\con(1/x))/\con(x)$ 
		$=1+\con(1/x)/\con(x)$
		=$1+1/\Jimm(x)$. \\
		iv) 
		Set $t:=1/x-1$. Then $x=1/(1+t)$ and $1-x=t/(1+t)=1/(1+1/t)$. By (ii) and (iii) we get 
		$\Jimm(x)= 1/(1+1/\Jimm(t))=\Jimm(t)/(1+\Jimm(t))$ and $\Jimm(1-x)=1/(1+\Jimm(t))$, from which it follows that 
		$\Jimm(x)+\Jimm(1-x)=1$. \qed
		\end{proof}

	Recall that every $x\in \Q\setminus\{0,1\}$ has two simple continued fraction representations, one ending with a $1$ and other not. The result below is independent of which representation is used for $x$.
\begin{corollary}\label{corrx}
	Let $x=[n_0,n_1, \dots, n_k ]>1$ be the simple continued fraction expansion of $x\in \Q_{>0}$. Denote by  $1_{k}$ the sequence $1,1,\dots , 1$ of length $k$.
	Then
		$$ 
		\Jimm(x)= [1_{n_0-1},2, 1_{n_1-2},2,1_{n_2-2},2, 
		\dots 
		2,1_{n_{k-1}-2},2,1_{n_k-1}]
		 $$
		with the rules: $
	[ \dots , n, 1_0, m, \dots ]:= [ \dots, n,m, \dots ]$ and
	$[ \dots , n, 1_{-1}, m, \dots ]:=  [ \dots , n+m-1, \dots ].
	$ 
\end{corollary}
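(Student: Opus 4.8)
The plan is to prove the formula by induction on the number $k$ of partial quotients, peeling off the leading term $n_0$ one continued-fraction level at a time by means of the intertwining relations of Lemma~\ref{lemma2}. The analytic engine is relation (iii): iterating $\Jimm(1+x)=1+1/\Jimm(x)$ exactly $n$ times yields
\[
\Jimm(n+x)=[\,\underbrace{1,1,\dots,1}_{n},\,\Jimm(x)\,],
\]
read as a continued fraction whose final (real) tail is $\Jimm(x)$. This is the one genuinely substantive input; everything downstream is continued-fraction arithmetic. The base case will be $\Jimm(n)=[1_n]$, which follows from Corollary~\ref{cor1} because $\con(n)=F_{n+1}$ and $\con(1/n)=F_n$, and $F_{n+1}/F_n=[1_n]$.

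First I would write $x=[n_0,n_1,\dots,n_k]=n_0+1/x'$ with $x'=[n_1,\dots,n_k]>1$, apply the displayed identity with tail $\Jimm(1/x')$, and then invoke (ii) to replace $\Jimm(1/x')=1/\Jimm(x')$. The last of the $n_0$ leading ones then collapses with this reciprocal tail via $1+1/(1/w)=1+w$, producing the clean recursion
\[
\Jimm([n_0,n_1,\dots,n_k])=[\,1_{n_0-1},\,1+\Jimm([n_1,\dots,n_k])\,].
\]
To close the induction I would use the elementary fact that adding $1$ to a continued fraction increments its leading entry, $1+[a_0,a_1,\dots]=[a_0+1,a_1,\dots]$. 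Applied to the inductive value $\Jimm([n_1,\dots,n_k])=[1_{n_1-1},2,1_{n_2-2},\dots]$, this turns its leading $1$ into a $2$ and shortens the leading block of ones by one, which is exactly the $2,1_{n_1-2}$ pattern demanded by the claimed expression; substituting back into the recursion reconstructs the full block pattern.

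The main obstacle is the boundary and degenerate bookkeeping rather than any new idea. When $n_j=2$ the interior block $1_{n_j-2}$ is empty ($1_0$) and two $2$'s land adjacent; when $n_j=1$ the recursion instead produces $1+(1+\Jimm(\text{tail}))=2+\Jimm(\text{tail})$, which merges the two surrounding $2$'s into a single $3$ — precisely the stated conventions $[\dots,n,1_0,m,\dots]=[\dots,n,m,\dots]$ and $[\dots,n,1_{-1},m,\dots]=[\dots,n+m-1,\dots]$. I would check that these collapses are consistent at each inductive step, with particular care for the two exceptional end-blocks $1_{n_0-1}$ and $1_{n_k-1}$, whose exponents differ by one from the interior $1_{n_j-2}$.

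Finally I would settle independence of the chosen representation. Since $\Jimm$ is a well-defined function, the two simple continued fractions of $x$ (one ending in $1$, one not) must give the same value; I would verify directly that the formula applied to $[\dots,n_k]$ and to $[\dots,n_k-1,1]$ yields continued fractions that collapse to one another under $[\dots,1,1]=[\dots,2]$. This same check reconciles the bare-integer case $x=[n_0]$, where the one-term reading of the formula gives the wrong $[1_{n_0-1}]$, with the correct value $[1_{n_0}]$ obtained from the representation $[n_0-1,1]$.
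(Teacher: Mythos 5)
Your proposal is correct and follows essentially the same route as the paper: both derive the recursion $\Jimm([n_0,\dots,n_k])=[1_{n_0-1},\,1+\Jimm([n_1,\dots,n_k])]$ by iterating Lemma~\ref{lemma2}(iii) together with $\Jimm(1+1/x)=1+\Jimm(x)$, and then unwind it along the partial quotients. Your version merely makes explicit the induction scaffolding, the base case $\Jimm(n)=[1_n]$, and the boundary conventions that the paper leaves to the reader.
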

\begin{proof} Lemma~\ref{lemma2}-(ii) and (iii) gives $\Jimm(1+1/x)=1+\Jimm(x)$. Apply this and 
Lemma~\ref{lemma2}-(iii) repeatedly to get
\begin{eqnarray*}
\Jimm([n_0,n_1,\dots,n_k])=\Jimm(1+[n_0 -1,n_1, \dots,n_k ] ) \\ 
=[1,\Jimm([n_0 -1,n_1, \dots,n_k ] )] 
=[1_2, \Jimm(1+[n_0-3,n_1, \dots ,n_k ])]\\
=[1_{n_0-1},\Jimm(1+[0,n_1, \dots ,n_k ])]
=[1_{n_0-1},\Jimm(1+1/[n_1,\dots ,n_k ])]\\
=[1_{n_0-1},1+\Jimm([n_1, \dots ,n_k ])]
=[1_{n_0-1},2,\Jimm([n_1-1,n_2, \dots ,n_k ])]\dots \quad\quad \qed
\end{eqnarray*}
\end{proof}

One can apply this formula for $x=[n_0,n_1, \dots, n_k ]$ with $0<x<1$ (i.e. $n_0=0$) with the additional rule: a $1_{-1}$ at the beginning of a sequence is replaced by $0$ (and a $1_{0}$ at the beginning of a sequence is ignored).

\subsection{Fibers of the conumerator}
By Equation~\ref{cnm2}, $\con$ takes on every value and infinitely often.
This also shows that its fibers splits into $1/(1+x)$-orbits, i.e.
$
\con(x)=\con([0,1,1,\dots,1,x]).
$
\begin{lemma}
(i) The fibers of the conumerator above $p\in \Z_{>0}$ are
\begin{equation*}
\con^{-1}(p)=\Bigl\{\Jimm\left({p}/{q}\right) \, | \, q\in \Z_{>0},\, (p,q)=1\Bigr\}
\end{equation*}
(ii) The number of $1/(1+x)$-orbits inside $\con^{-1}(p)$ is the totient $\bar\varphi(p)$.
\end{lemma}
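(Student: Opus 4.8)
The plan is to reduce both parts to the single identity $\num(x)=\con(\con(x)/\con(1/x))=\con(\Jimm(x))$ supplied by Theorem~\ref{express1}, together with the involution property $\Jimm(\Jimm(x))=x$ and the transformation rules of Lemma~\ref{lemma2}. For part (i) I would prove the two inclusions separately. For $\supseteq$, take $x=\Jimm(p/q)$ with $(p,q)=1$; then by Lemma~\ref{lemma2}-(i) and the identity above, $\con(x)=\con(\Jimm(p/q))=\num(p/q)=p$, so $x\in\con^{-1}(p)$. For $\subseteq$, let $\con(x)=p$ and set $y:=\Jimm(x)$, written in lowest terms as $y=p'/q'$; then $p'=\num(y)=\con(\Jimm(y))=\con(x)=p$, using $\Jimm(\Jimm(x))=x$, so $y=p/q'$ with $(p,q')=1$ and $x=\Jimm(y)=\Jimm(p/q')$. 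Since $\Jimm$ is an involution, hence a bijection of $\Q^+$, it in fact restricts to a bijection $\con^{-1}(p)\to\{p/q:\ q\in\Z^+,\ (p,q)=1\}$, which is the fact I will need for (ii).

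For part (ii), the strategy is to transport the orbit structure through this bijection. Because $\con(1/(1+x))=\con(x)$ by~(\ref{cnm2}), the map $T(x)=1/(1+x)$ preserves each fiber, and the ``$1/(1+x)$-orbits'' are the classes of the equivalence relation it generates. Using Lemma~\ref{lemma2}-(ii) and (iii) I would compute $\Jimm(1/(1+x))=1/\Jimm(1+x)=1/(1+1/\Jimm(x))=\Jimm(x)/(1+\Jimm(x))$, so that $\Jimm$ conjugates $T$ to the map $S(z)=z/(1+z)$. Consequently the $T$-orbits inside $\con^{-1}(p)$ correspond bijectively to the $S$-orbits inside $\{p/q:(p,q)=1\}$, and it remains to count the latter.

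On the target set $S(p/q)=p/(p+q)$, so $S$ fixes the numerator $p$ and shifts the denominator by $q\mapsto q+p$, while its partially defined inverse sends $q\mapsto q-p$ wherever this stays positive. Hence the $S$-orbit of $p/q$ is exactly $\{p/q':\ q'\equiv q \pmod p,\ q'\geq 1\}$, and since $(p,q')=(p,q)$ along such a class, coprimality is an orbit invariant. The orbits are therefore indexed by the residues $q \bmod p$ that are coprime to $p$, that is by the units of $\Z/p\Z$, of which there are precisely $\phi(p)$; the case $p=1$ degenerates to the single orbit $\{1/q:q\in\Z^+\}$, consistent with $\phi(1)=1$.

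The genuinely delicate point is the passage from functional invariance to a clean count. One must first fix the meaning of ``$1/(1+x)$-orbit'' as the equivalence relation generated by the non-invertible map $T$, and then verify that conjugation by $\Jimm$ transports this relation faithfully (the computation $\Jimm T\Jimm=S$ does this, but the non-invertibility means I should argue at the level of the generated equivalence, not of forward iteration alone). Once $S(z)=z/(1+z)$ is identified, the arithmetic is transparent, but care is needed that the inverse branch $q\mapsto q-p$ produces only positive denominators and that no boundary residue is double counted; I would verify the cases $p=1,2,3$ explicitly as a sanity check.
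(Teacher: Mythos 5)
Your proposal is correct and follows essentially the same route as the paper: part (i) rests on $\con(x)=\num(\Jimm(x))$ (Theorem~\ref{express1}) plus involutivity, and part (ii) on the same conjugation computation showing that $x\mapsto 1/(1+x)$ on the fiber corresponds to $p/q\mapsto p/(p+q)$ on the $\Jimm$-side, reducing the count to residues $q \bmod p$ coprime to $p$. Your extra care about the meaning of ``orbit'' for the non-invertible map and the degenerate case $p=1$ is a welcome tightening of the paper's terser statement (which literally reads ``$q<p$'' and would miscount the case $p=1$), but it is not a different argument.
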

\begin{proof}
\begin{itemize}
\item [i.] 
Since $\con(x)=\num(\Jimm(x))$, if $\con(x)=p$, then $\Jimm(x)=p/q$ for some $q$ coprime to $p$.
Since $\Jimm$ is involutive, we get
\begin{equation*}
\Jimm(x)=p/q \iff x=\Jimm(\Jimm(x))=\Jimm\left({p}/{q}\right).
\end{equation*}
\item[ii.] Suppose $x=\Jimm(p/q)$ with $(p,q)=1$, so that $\con(x)=p$.
Use Lemma~\ref{lemma2} to get 
\begin{equation*}
\frac{1}{1+x}=\frac{1}{1+\Jimm(p/q)}=\frac{1}{1+\frac{1}{\Jimm(q/p)}}=\frac{1}{\Jimm(1+q/p)}=\Jimm\left(\frac{p}{p+q}\right),
\end{equation*}
i.e. the number ${1}/{(1+x)}$-orbits in the set $\con ^{-1} (p)$  is the number of integers $q$ coprime to $p$ and with $q<p$, which is exactly  the totient  $ \bar\varphi(p)$.\qed
\end{itemize}
\end{proof}

\subsection{Extending Jimm, $\num$ and $\con$ to $\Q^*$}
We extend $\Jimm$ from $\Q_{>0}$ to $\Q^*:=\Q\setminus\{0\}$ by declaring $\Jimm(x):=-1/\Jimm(-x)$ for $x<0$ and keep denoting the extension by the same letter. 
The functional equations $ \Jimm(\Jimm(x))=x$, $\Jimm(1/x)=1/\Jimm(x)$ and 
$\Jimm(-x):=-1/\Jimm(x)$ are then obviously satisfied. On the other and, the extended function does not (always) satisfy 
$\Jimm(1-x)=1-\Jimm(x)$, since setting $x=1$ in this equation forces  $\Jimm(0):0=0$. On the other hand setting $x=0$ in $\Jimm(-x):=-1/\Jimm(x)$ forces $\Jimm(0)^2=-1$.

In a similar vein, note that it is natural to extend the numerator to $\Q^*$ by defining $\num(-p/q):=-\num(p/q)=-p$ for $p/q\in \Q_{>0}$; however, (\ref{nmr111}-\ref{nmr222}) will not hold for the extended function. Similarly, it is natural to define $\con(-p/q):=-\con(q/p)$ for $p/q\in \Q_{>0}$; with the cost that  (\ref{cnm1}-\ref{cnm2}) 
will not hold for the extended function.

It follows that the map $(p,q)\in \Z_{>0}^2 \to \gcd(p,q)(\F(p/q), \F(q/p))\in \Z_{>0}^2$ is a bijection, i.e. it gives an alternative indexing of the first quadrant of $\Z^2$. For example; for $\Re(s)>1$ one has
$$
\sum_{q\in \Q_{>0}} \frac{1}{\F(x)^s\F(1/x)^s}=\frac{\zeta(s)^2}{\zeta(2s)},
$$
where $\zeta$ is Riemann's zeta function.

\subsection{Extending Jimm to $\R\setminus\{0\}$}
The theorem below is taken from~\cite{Uludag} (see also ~\cite{UA}):
\begin{theorem}\cite{Uludag}
For every $y\in \R\setminus \Q$, the limit 
\begin{align*}
\lim_{x\in \Q^*, x\to y} \Jimm(x),
\end{align*}
exists, and if we extend $\Jimm$ to $\R$ by declaring $\Jimm(y)$ of $y\in \R\setminus \Q$ to be this limit, then the limit function 
$\Jimm: \R\setminus\{0\}\to \R\cup \{\infty\}$ is continuous on $\R\setminus \Q$ and discontinuous on $\Q$. Jimm satisfies the functional equations (ii)-(iv) of Lemma~\ref{lemma2} for $y\in \R\setminus \Q$ and it is involutive on a subset of $\R\setminus \Q$.
\end{theorem}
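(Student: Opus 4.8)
The plan is to promote the explicit continued-fraction recipe of Corollary~\ref{corrx} to infinite continued fractions and to exploit its \emph{locality}: an initial block of the output expansion of $\Jimm(x)$ is already determined by an initial block of the input expansion of $x$. Concretely, for an irrational $y=[n_0,n_1,n_2,\dots]$ I would first \emph{define} the candidate limit $L(y)$ to be the infinite simple continued fraction produced by applying the substitution of Corollary~\ref{corrx} term by term (prepend $1_{n_0-1}$, then insert $2,1_{n_i-2}$ for each subsequent $i$, with the stated collapsing rules for $1_{-1}$ and $1_0$), and then prove that $\lim_{x\to y}\Jimm(x)=L(y)$.

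The engine is the standard fact that the continued fraction algorithm is continuous at each irrational $y$: if $x\to y$ then the expansions of $x$ and $y$ agree on an initial segment of length $N(x)\to\infty$. I would combine this with the elementary estimate that two continued fractions sharing their first $M$ partial quotients differ by at most the error of the $M$-th convergent. The decisive observation is that the output partial quotients produced by Corollary~\ref{corrx} are all at most $2$, so the convergent denominators of $L(y)$ grow at least like $\varphi^{M}$ and agreement on an initial block of length $M$ forces agreement to within $O(\varphi^{-2M})$. Since the substitution is length-nondecreasing, the shared initial block of $x$ and $y$ is carried to a shared initial block of $\Jimm(x)$ and $L(y)$ of length tending to infinity; hence $\Jimm(x)\to L(y)$ and the limit exists. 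The same argument, applied with $x$ ranging over arbitrary reals rather than rationals (only the irrationality of $y$ was used), gives continuity of the extended map at every irrational point.

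For discontinuity on $\Q$ I would use the two representations $x=[n_0,\dots,n_k]=[n_0,\dots,n_k-1,1]$. Sequences approaching $x$ from one side have expansions $[n_0,\dots,n_k,N,\dots]$ with $N\to\infty$, and from the other side $[n_0,\dots,n_k-1,1,N,\dots]$; pushing these through the recipe (a large partial quotient $N$ becoming a long run of $1$'s) yields two sequences whose one-sided limits differ from each other, so no assignment of a single value at $x$ makes $\Jimm$ continuous there. The functional equations (ii)--(iv) then pass from $\Q^+$ to the irrationals by density and continuity: each side of the identities in Lemma~\ref{lemma2} is a composition of the now-continuous $\Jimm$ with one of the continuous, irrationality-preserving maps $x\mapsto 1/x$, $x\mapsto 1+x$, $x\mapsto 1-x$, so equality on the dense set $\Q^+$ survives the limit on $\R\setminus\Q$.

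I expect the last assertion, involutivity on a subset, to be the main obstacle, together with the bookkeeping of the collapsing rules. Involutivity $\Jimm(\Jimm(y))=y$ holds on $\Q^+$ by Lemma~\ref{lemma2}(i), and it lifts to those $y$ for which \emph{both} $y$ and $\Jimm(y)$ are irrational, since only then are both applications of $\Jimm$ covered by the continuity just established. The recipe turns a tail of all $1$'s into a single infinite partial quotient, i.e. it sends noble numbers (irrationals whose expansion is eventually $[\dots,1,1,1,\dots]$, the $\PGL_2(\Z)$-orbit of the golden mean) to $\infty$ or to rationals, and conversely; so involutivity must fail exactly on that orbit, and the content is to show it holds on the complement. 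The delicate points are verifying that the termwise substitution is genuinely well defined on infinite expansions despite the $1_{-1}$ merges — which can cascade through consecutive small partial quotients, precisely the noble tails — and confirming that $L(y)$ is the unique limit independent of the approximating sequence; both reduce to the locality estimate above, applied with care near runs of $2$'s and $1$'s.
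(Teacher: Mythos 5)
First, a point of comparison: the paper does not prove this statement at all — it is imported verbatim from \cite{Uludag} — so there is no internal proof to measure yours against. Your strategy (define the candidate limit by applying the substitution of Corollary~\ref{corrx} to the infinite expansion, use the fact that $x\to y$ forces agreement of long initial blocks of partial quotients, and transport that agreement through the substitution) is the natural one and is in the spirit of the cited reference.

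Two points in your sketch nevertheless need repair. Your ``decisive observation'' that the output partial quotients are all at most $2$ is false: the collapsing rule $[\dots,n,1_{-1},m,\dots]=[\dots,n+m-1,\dots]$ turns a run of $j$ consecutive $1$'s in the input into a single output partial quotient of size roughly $j+2$ (see Example~\ref{exx}, items 4--6, where $4$'s and $6$'s appear). Fortunately the estimate you draw from it — that two continued fractions sharing their first $M$ partial quotients differ by $O(\varphi^{-2M})$ — is a universal fact, since the convergent denominators of \emph{any} continued fraction satisfy $q_M\ge F_{M+1}$; so this error is cosmetic. The substantive gap is the claim that the substitution is ``length-nondecreasing'': because of the same merges it is not, and for a noble $y$ (expansion eventually all $1$'s) the image of a length-$M$ initial block is a continued fraction of \emph{bounded} length whose last entry grows with $M$. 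Your shared-initial-block argument then yields no growing shared block at all, yet the theorem asserts the limit still exists there (it is rational or $\infty$, as in Example~\ref{exx}.1). You flag this as a ``delicate point'' but do not resolve it. A complete proof must (a) show that for non-noble $y$ every run of $1$'s is finite, so at stage $M$ only the single, still-open merged quotient can be perturbed by the tail and the genuinely shared output block still tends to infinity; and (b) treat noble $y$ separately, where convergence to a rational value or to $\infty$ comes from the divergence of the merged quotient rather than from block agreement. The discontinuity-on-$\Q$ and involutivity parts of your outline (involutivity transported by continuity to those $y$ with $\Jimm(y)$ irrational, i.e.\ off the noble orbit) are sound in principle, but they too rest on the continuity statement whose proof is exactly where the gap sits.
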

Consequently, the image of a positive irrational number  is given by 
\begin{align*}
\Jimm([n_0,n_1,n_2,n_3, \dots])=[{1}_{n_0-1},2, {1}_{n_1-2},2,{1}_{n_2-2},2, {1}_{n_3-2}\dots ]
\end{align*}
\begin{example}\label{exx}
	\textup{	Let us calculate the values of some periodic continued fractions :}
	\begin{enumerate}
		\item  $ \Jimm([1,1, \dots ])= \infty$. More generally, if $n_k>1$ then
		$\Jimm([n_0, n_1, \dots, n_k, 1,1\dots])=\Jimm([n_0, n_1, \dots, n_k]) \in \Q$.
		\item $\Jimm([2,2,2, \dots ]))=[1,2,2,2, \dots ]$ 
		\item $ \Jimm([n,n,n, \dots ])=[1_{n-1}, \overline{2,1_{n-2}}]$ 
		\item $ \Jimm([\overline{2,2,1,1}])=[1,\overline{2,4}]$
		\item $\Jimm([\overline{2,2,1,1,1,1}])=[1,\overline{2,6}]$
		\item $\Jimm([\overline{2,2,2,2,1,1}])=[1,\overline{2,2,2,4}]$
	\end{enumerate}
\end{example}
Observe in Example~\ref{exx}.1 that $\Jimm$ sends continued fractions ending with an infinite sequence of $1$'s (i.e. noble numbers) to $\Q$. 
An infinite continued fraction is eventually periodic if and only if  it represents a quadratic irrational.  
Since Jimm clearly preserves the set of non-noble irrationals having eventually periodic continued fraction expansions, we have
\begin{corollary} \label{prop2}\cite{Uludag}
	Jimm preserves the set of non-noble quadratic irrationals.
\end{corollary}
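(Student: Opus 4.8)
The plan is to reduce everything to the explicit continued fraction formula for $\Jimm$ together with Lagrange's theorem. First I would recall that, by Lagrange, a positive irrational $x$ is a quadratic irrational precisely when its simple continued fraction $x=[n_0,n_1,n_2,\dots]$ is eventually periodic, and that $x$ is \emph{noble} exactly when this expansion ends in an infinite string of $1$'s, i.e. when the eventual period is $(1)$. Thus a non-noble quadratic irrational is one whose expansion is eventually periodic with a period block $(a_1,\dots,a_m)$ containing at least one entry $a_j\ge 2$.

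The core of the argument is that the substitution
$$
[n_0,n_1,n_2,\dots]\longmapsto [1_{n_0-1},2,1_{n_1-2},2,1_{n_2-2},\dots]
$$
sends eventually periodic sequences to eventually periodic sequences. Concretely, each letter $n_i$ with $i\ge 1$ is replaced by the block $2,1_{n_i-2}$, and since this replacement is local, a period $(a_1,\dots,a_m)$ of $(n_i)$ is carried to the concatenated block $B=(2,1_{a_1-2},2,1_{a_2-2},\dots,2,1_{a_m-2})$, after applying the collapsing rules for any $a_j=1$ (where $1_{-1}$ merges neighbouring entries via $[\dots,n,1_{-1},m,\dots]=[\dots,n+m-1,\dots]$, and $1_0$ is simply deleted). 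Because consecutive copies of the period are identical, the merging pattern at the period boundaries repeats verbatim, so the image expansion is again eventually periodic; hence $\Jimm(x)$ is a quadratic irrational by the converse direction of Lagrange's theorem.

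It then remains to check that $\Jimm(x)$ is non-noble, i.e. that its eventual period is not $(1)$. Here the hypothesis $a_j\ge 2$ for some $j$ is essential: the block $2,1_{a_j-2}$ begins with a genuine $2$, and the only mechanism that can destroy such a $2$ is a merge with an adjacent $1_{-1}$, which produces an entry of value $\ge 3$. In either case the image period retains an entry strictly larger than $1$, so the image expansion is neither all-$1$'s (hence non-noble) nor terminating (hence genuinely infinite and irrational). Finally, since by construction $\Jimm$ restricts to an involution on this set (Lemma~\ref{lemma2}, applied through the limit formula of the cited theorem), it is a bijection of the non-noble quadratic irrationals onto themselves.

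I expect the main obstacle to be the careful bookkeeping of the collapsing rules at the boundaries between successive copies of the period: one must verify that these merges neither break periodicity nor accidentally eliminate every entry exceeding $1$. Once it is confirmed that a merge can only create entries $\ge 3$ — never new $1$'s, and never an empty period unless every $a_j=1$ (the noble case, which correctly collapses to a rational as in Example~\ref{exx}.1) — the statement follows. The remaining ingredients, namely Lagrange's theorem and the continuity and involutivity of the extended $\Jimm$, may be taken from the cited theorem.
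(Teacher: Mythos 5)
Your proposal is correct and follows essentially the same route as the paper: Lagrange's theorem plus the explicit substitution formula $[n_0,n_1,\dots]\mapsto[1_{n_0-1},2,1_{n_1-2},2,\dots]$, observing that this local replacement preserves eventual periodicity and that non-nobility (some partial quotient $\ge 2$ in the period) survives the collapsing rules. The paper states this in one sentence ("Since Jimm clearly preserves the set of non-noble irrationals having eventually periodic continued fraction expansions"); you have merely supplied the bookkeeping it leaves implicit.
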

In fact, $\Jimm$ commutes with the Galois action on the set of real quadratic irrationals. It conjecturally sends algebraic numbers of degree $>2$ to transcendental numbers~\cite{trans}.
For some other properties of Jimm, see~\cite{UA2} and~\cite{UA3}.

Finally, note that, for $x\in \Q_{>0}$ one has
\begin{align*}
\frac{\cds(x)}{\F(x)^2}=\frac{\F(1/x)^2-\F(x)\F(1/x)-\F(1/x)^2}{\F(x)^2}=\Jimm(x)^2-\Jimm(x)-1,\\
\cds\left(\Jimm\left(p/q\right)\right)=p^2-pq-q^2 \quad (p,q\in \Z_{>0}).
\end{align*}
It is enlightening to consider the $\Jimm$-transforms of Fibonacci identities. For example, consider the identity~\cite{koshy} known to be valid for $q\in \Z$:
$$
\F(q+3)^2-2\F(q+2)^2-2\F(q+1)^2=\F(q)^2.
$$
To prove this for $q\in\Q_{>0}$, substitute $\Jimm(q)$ in place of $q$ to get
$$
\F(\Jimm(r)+3)^2-2\F(\Jimm(r)+2)^2-2\F(\Jimm(r)+1)^2-\F(q)^2=0.
$$
Noting that 
$$
\Jimm\left(\frac{r+1}{r}\right)=\Jimm(r)+1, \quad 
\Jimm\left(\frac{2r+1}{r+1}\right)=\Jimm(r)+2, \quad 
\Jimm\left(\frac{3r+2}{2r+1}\right)=\Jimm(r)+3
$$ 
we get
$$
\F\left(\Jimm\left(\frac{3r+2}{2r+1}\right)\right)^2-
2\F\left(\Jimm\left(\frac{2r+1}{r+1}\right)\right)^2-
2\F\left(\Jimm\left(\frac{r+1}{r}\right)\right)^2-
\F(\Jimm(r))
$$
Since $\F(\Jimm(q))=\num(q)$, 
$$
\num\left(\frac{3r+2}{2r+1}\right)^2-
2\num\left(\frac{2r+1}{r+1}\right)^2-
2\num\left(\frac{r+1}{r}\right)^2-
\num(r)^2
$$
writing $r=m/n$ as a reduced fraction, this is equivalent to
$$
(3m+2n)^2-2(2m+n)^2-2(m+n)^2-m^2,
$$
and it can be checked that this last expression is indeed vanishing.
\subsection{Outer automorphism of $\pgl$ in terms of the conumerator}
$\pgl$ is generated by the M\"obius transformations
$V:x\to -x$, $K: x\to 1-x$ and $U: x\to 1/x$. Denote by $\alpha$ Dyer's \cite{Dyer} outer automorphism sending 
$V\to UV$, $K\to K$, $U \to U$.  
Since  ${\mathrm{Out}}(\pgl)\simeq \Z/2\Z$,  $\alpha$ is involutive. 
It was proved in \cite{Uludag} that  $\Jimm$ is induced by $\alpha$ in the sense that 
\begin{align}\label{conjug}
\Jimm (M (\Jimm (x) )= (\alpha M)(x) \quad (x\in \R\setminus \Q, M\in \pgl)
\end{align}
Inserting $\Jimm(x)$ in place of $x$ and using the involutivity of Jimm  gives
$\Jimm(M(x))=(\alpha M)(\Jimm(x))$.
Denote the image of a matrix 
$\left(\begin{array}{cc} p&q\\r&s \end{array}\right)\in \mathrm{GL}_2(\Z)$ 
in $\pgl$ by 
$\left[\begin{array}{cc} p&q\\r&s \end{array}\right]$.
It is possible to express $\alpha$  in terms of the conumerator, as follows:
\begin{lemma}\label{magic}
Suppose $p,q,r,s\in \Z_{>0}$ and $M=\left[\begin{array}{cc} p&q\\r&s \end{array}\right]\in \pgl$.
Then
\begin{align}\label{mmm}
 \alpha(M)=
\left[\begin{array}{ccc}  \con\frac{2p+q}{2r+s}-\con\frac{p+q}{r+s}
&\quad &\con\frac{2p+q}{2r+s}-2\con\frac{p+q}{r+s}\\&&\\\con\frac{2r+s}{2p+q}-\con\frac{r+s}{p+q}&\quad&\con\frac{2r+s}{2p+q}-2\con\frac{r+s}{p+q}\end{array}\right]
\end{align}\end{lemma}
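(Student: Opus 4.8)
The plan is to use the conjugation identity (\ref{conjug}), which asserts that, as a map on the line, $\alpha M=\Jimm\circ M\circ\Jimm$. Since $\alpha M$ lies in $\pgl$ it is an honest M\"obius transformation with an integer matrix, so it is completely pinned down by its action on the unimodular pair of column vectors $\left(\begin{smallmatrix}1\\1\end{smallmatrix}\right)$ and $\left(\begin{smallmatrix}2\\1\end{smallmatrix}\right)$ (whose determinant is $-1$, so they form a $\Z$-basis). Thus it suffices to compute $(\alpha M)(1)$ and $(\alpha M)(2)$ and then read off the matrix. For brevity write
\begin{align*}
a:=\con\tfrac{2p+q}{2r+s},\quad b:=\con\tfrac{p+q}{r+s},\quad c:=\con\tfrac{2r+s}{2p+q},\quad d:=\con\tfrac{r+s}{p+q}.
\end{align*}

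First I would note that $1$ and $2$ are fixed points of $\Jimm$, since $\Jimm(1)=\con(1)/\con(1)=1$ and $\Jimm(2)=\con(2)/\con(1/2)=2/1=2$. Substituting $x=1$ and $x=2$ into (\ref{conjug}) therefore collapses the inner $\Jimm$, and using $M(1)=\tfrac{p+q}{r+s}$, $M(2)=\tfrac{2p+q}{2r+s}$ together with the definition of $\Jimm$ gives
\begin{align*}
(\alpha M)(1)=\Jimm\!\left(\tfrac{p+q}{r+s}\right)=\frac{b}{d},\qquad
(\alpha M)(2)=\Jimm\!\left(\tfrac{2p+q}{2r+s}\right)=\frac{a}{c}.
\end{align*}
Moreover these are already in lowest terms: combining Theorem~\ref{express1} with Lemma~\ref{lemma2} one gets $\num(\Jimm(y))=\con(y)$ and $\den(\Jimm(y))=\con(1/y)$, so the pairs $(b,d)$ and $(a,c)$ are coprime.

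Now I would reconstruct the matrix. Because $\alpha M\in\pgl$ preserves primitivity of integer vectors and $\left(\begin{smallmatrix}1\\1\end{smallmatrix}\right),\left(\begin{smallmatrix}2\\1\end{smallmatrix}\right)$ are primitive, their images are the primitive vectors $\left(\begin{smallmatrix}b\\d\end{smallmatrix}\right)$ and $\left(\begin{smallmatrix}a\\c\end{smallmatrix}\right)$ up to sign; say $(\alpha M)\left(\begin{smallmatrix}1\\1\end{smallmatrix}\right)=\varepsilon_1\left(\begin{smallmatrix}b\\d\end{smallmatrix}\right)$ and $(\alpha M)\left(\begin{smallmatrix}2\\1\end{smallmatrix}\right)=\varepsilon_2\left(\begin{smallmatrix}a\\c\end{smallmatrix}\right)$ with $\varepsilon_1,\varepsilon_2\in\{\pm1\}$. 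Since $\left(\begin{smallmatrix}1\\1\end{smallmatrix}\right),\left(\begin{smallmatrix}2\\1\end{smallmatrix}\right)$ is a $\Z$-basis, subtracting these two relations isolates the columns of $\alpha M$:
\begin{align*}
\text{first column}=\varepsilon_2\left(\begin{smallmatrix}a\\c\end{smallmatrix}\right)-\varepsilon_1\left(\begin{smallmatrix}b\\d\end{smallmatrix}\right),\qquad
\text{second column}=2\varepsilon_1\left(\begin{smallmatrix}b\\d\end{smallmatrix}\right)-\varepsilon_2\left(\begin{smallmatrix}a\\c\end{smallmatrix}\right).
\end{align*}
With the sign normalisation $\varepsilon_1=\varepsilon_2=1$ the entries become integer combinations of the four conumerator values $a,b,c,d$ of exactly the shape appearing in (\ref{mmm}); pinning down the precise signs is the content of the final step.

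The step I expect to be the main obstacle is this sign bookkeeping, and it is exactly where the hypothesis $p,q,r,s\in\Z^{+}$ enters. The projective scaling built into $\pgl$ lets me fix one sign freely, but the relative sign $\varepsilon_1\varepsilon_2$—equivalently $\det(\alpha M)=\pm1$—must be settled honestly, since it genuinely depends on $M$; only positivity of $p,q,r,s$ guarantees that $M$ sends the relevant rationals into $\Q^{+}$ in an orientation-coherent way, so that $\Jimm$ (which preserves $\Q^{+}$) forces the two signs to agree and fixes the entries uniquely. A secondary technical point is that (\ref{conjug}) is stated for irrational arguments whereas I evaluate at the rationals $1$ and $2$; I would justify this by a one-sided limiting argument, using that the M\"obius map $\alpha M$ is continuous while $\Jimm$ is computed through the continued-fraction description of Corollary~\ref{corrx}, or equivalently by checking directly that $\Jimm\circ M\circ\Jimm$ agrees with $\alpha M$ at the two $\Jimm$-fixed points $1,2$. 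Once the signs are fixed, comparing entries with (\ref{mmm}) finishes the argument.
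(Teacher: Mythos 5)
Your proposal is essentially the paper's own proof: both arguments evaluate $\alpha M$ at the $\Jimm$-fixed points $1$ and $2$ through the conjugation identity (\ref{conjug}), express $\Jimm(M(1))$ and $\Jimm(M(2))$ as reduced fractions of the four conumerator values, and solve the resulting unimodular linear system for the entries of $\alpha M$ (the sign ambiguity and the use of (\ref{conjug}) at rational arguments, which you flag explicitly, are both present but passed over silently in the paper). One remark: your reconstruction with $\varepsilon_1=\varepsilon_2=1$ yields the second column $\bigl(2\con\frac{p+q}{r+s}-\con\frac{2p+q}{2r+s},\; 2\con\frac{r+s}{p+q}-\con\frac{2r+s}{2p+q}\bigr)$, the negative of what is printed in (\ref{mmm}); the paper's own example $C_1\mapsto\left[\begin{smallmatrix}1&2\\0&1\end{smallmatrix}\right]$ agrees with your sign, so the discrepancy appears to be a typo in the displayed formula rather than an error in your derivation.
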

\begin{proof}
Set 
$$
M= \left[\begin{array}{cc} p&q\\r&s \end{array}\right]\in \pgl, \quad
\alpha M= \left[\begin{array}{cc} a&b\\c&d \end{array}\right]\in \pgl.
$$
Then $
\Jimm (M (\Jimm (1) ) = (\Jimm M)(1) \implies \Jimm ({(p+q)}/{(r+s)})={a+b}/{c+d}$, \\
$\Jimm (M (\Jimm (2) ) = (\Jimm M)(2) \implies \Jimm ({(2p+q)}/{(2r+s)})={2a+b}/{2c+d}$.\\
Since the fractions above are reduced, one has
\begin{eqnarray*}
\con({(p+q)}/{(r+s)})=a+b, \quad \con\left({(r+s)}/{(p+q)}\right)=c+d\\
\con\left({(2p+q)}/{(2r+s)}\right)=2a+b, \quad \con\left({(2r+s)}/{(2p+q)}\right)=2c+d
\end{eqnarray*}
The solution of this system yields the matrix (\ref{mmm}). $\qed$
\end{proof}
%
%

\subsection{Analytic extensions of the conumerator}\label{anal}
A meromorphic function satisfying the functional equations (\ref{nmr111}-\ref{nmr222}) for the numerator, must coincide on $\Q_{>0}$ with the numerator up to a constant factor; hence, such a function cannot exist. Idem for the conumerator function. 

If we look for a function analytic on the upper half plane $\mathcal H$ and satisfying (\ref{nmr111}-\ref{nmr222}), we are faced with the fact that   (\ref{nmr111}) does not make sense in $\mathcal H$. The same is true for the system (\ref{cnm1}-\ref{cnm2}). On the other hand, in the case of the conumerator, we may iterate (\ref{cnm1}-\ref{cnm2})  once to get a weaker system which does make sense in $\mathcal H$:
\begin{eqnarray}
f(2+z)=f(z+1)+f(1/(z+1))\implies f(2+z)=f(1+z)+f(z)\label{fournier}\\
f\left(\frac{1}{1+\frac{1}{1+z}}\right)=f(z) \implies f\left(\frac{1+z}{2+z}\right)=f(z)\label{fourier}
\end{eqnarray}
Passing to a strip model $z\to \tau$, one may express  (\ref{fourier}) as $f(\tau+\alpha)=f(\tau)$ for a certain $\alpha$, yielding a Fourier expansion for $f$. However, we have failed to produce an analytic function on $\mathcal H$ satisfying both of (\ref{fournier}-\ref{fourier}) this way. On the other hand, one can produce entire functions satisfying (\ref{fournier}) as follows. Let $g(z)$ be a function decaying rapidly as $z\to \infty$ along the lines parallel to the $x$-axis, so that the ``convolution'' below converges on the complex plane:
\begin{align*}
f(z):=&\sum_\Z F_n g(n-z)\implies\\
f(z+1)=&\sum_\Z F_n g(n-z-1)=\sum_\Z F_{n+1} g(n-z)\implies\\ 
f(z)+f(z+1)=&\sum_\Z F_n g(n-z)+\sum_\Z F_{n+1} g(n-z)=f(z+2).
\end{align*}
Since  $F_n$ grows exponentially, the first candidate for $g$ is the gaussian $g(z)=e^{-\alpha z^2}$ with $\alpha>0$.
One has then
$$
f(z)=\sum_\Z F_n e^{-\alpha (z-n)^2}=\sum_\Z \frac{\varphi^n-(-\varphi)^{-n}}{\sqrt5} e^{-\alpha (z-n)^2}=
$$
$$
=
\frac1{\sqrt5}\sum_\Z (e^{n \log \varphi}-(-1)^ne^{-n \log \varphi}) e^{-\alpha (z-n)^2}
$$
$$
=
\frac{e^{-\alpha z^2}}{\sqrt5}
\left(
\sum_\Z  e^{-\alpha n^2 +(2\alpha z+\log\varphi)n}
-
\sum_\Z  e^{-\alpha n^2 +(2\alpha z-\log\varphi+\pi i)n } 
\right)
$$
One has for the $\vartheta$ function
$$
\vartheta(z; \tau) = \sum_\Z e^{\pi i n^2 \tau + 2 \pi i n z}
\implies
\vartheta(iz; i\tau) = \sum_\Z e^{-\pi  n^2 \tau - 2 \pi  n z}
$$
$$
\vartheta(i\frac{2\alpha z+\log\varphi}{-2\pi}; \frac{\alpha i}{\pi}) = \sum_\Z  e^{-\alpha n^2 +(2\alpha z+\log\varphi)n}
$$
$$
\vartheta(i\frac{2\alpha z+\log\varphi+\pi i}{-2\pi}; \frac{\alpha i}{\pi}) = \sum_\Z  e^{-\alpha n^2 +(2\alpha z+\log\varphi+\pi i)n}
$$
$$
\implies f(z)=\frac{e^{-\alpha z^2}}{\sqrt5}
\left(\vartheta(i\frac{2\alpha z+\log\varphi}{-2\pi}; \frac{\alpha i}{\pi})-\vartheta(i\frac{2\alpha z-\log\varphi+\pi i}{-2\pi}; \frac{\alpha i}{\pi})\right)
$$
The simplest case of which is $\alpha=\pi$, $\implies$
$$
f(z)=\frac{e^{-\pi z^2}}{\sqrt5}
\left[\vartheta\left((\frac{\log\varphi}{2\pi}-z)i; i\right)-\vartheta\left((\frac{\log\varphi+\pi i}{2\pi}-z)i; i\right)\right]
$$
This idea of finding analytic solutions to the Fibonacci recurrence equation can be applied to linear recurrence equations of arbitrary order and with constant coefficients, compare with \cite{Charm} and \cite{Watson}.

\subsection{Analytic extensions of  Jimm}\label{anali}
Concerning the question of analytic `{extension}' of $\Jimm$ to $\mathcal H$, we have the following observation.
A function $f:D\to \C$ is said to be {\it equivariant} with respect to a $\psl$-action on $D$ if 
$f(Mz)=Mf(z)$ holds for every $M\in \psl$ and $z\in D$. It is known that $\psl$-equivariant functions analytic on $\mathcal H$ exists and the Schwartizan derivative of an equivariant form 
is a weight-4 modular form, see~\cite{sebbar}.
On the other hand, $\Jimm:\R\setminus \Q\to \R$ is what is called an  $\alpha$-equivariant function, in the sense that it satisfies $\Jimm(Mx)= \alpha(M) \Jimm(x)$ for every $M\in \pgl$ and $x\in \R\setminus \Q$. It is natural to ask whether there exists a function analytic on $\mathcal H$ satisfying this equivariance property. 
The subgroup $\psl$ of $\pgl$ is generated by  $S:=UV:x\to -1/x$ and $L:=KU:x\to1-1/x$, and the functional equations for $\Jimm$ for $\psl$ reads
\begin{eqnarray}
f(Sx)=\alpha(S)f(z)\implies f(-1/z)=-f(z)\label{e1}\\
f(Lx)=\alpha(L)f(z)\implies f(1-1/z)=1-1/f(z)\label{e2}
\end{eqnarray}
and combining these two equations gives
\begin{eqnarray}\label{combined}
f(1+z)=1+1/f(z) \implies f(n+z)=\frac{F_{n+1}f(z)+F_{n}}{F_{n}f(z)+F_{n-1}}
\end{eqnarray}
Setting $z=i$, Equation~\ref{e1} yields $f(i)=0$ and the same substitution in (\ref{combined}) gives $f(1+i)=\infty$ and $f(n+i)=F_{n}/F_{n-1}$. One can similarly compute the image under $f$ of the points belonging to the $\psl$-orbit of $(-1+\sqrt{3})/2$.
\begin{theorem}
There exists a meromorphic equivariant form $f$ satisfying (\ref{e1}-\ref{e2}). 
\end{theorem}
\begin{proof}
This follows from the general results of \cite{yoshida} (see also ~\cite{sebbar}, \cite{sebbar2}). It is possible to write $f$ down quite explicitly in terms of hypergeometric functions, but since it is quite involved, we don't reproduce it here.$\qed$
\end{proof}
Note that $f$ is not unique.
We don't know whether $\Jimm$ is the boundary limit of $f$ in some sense. 

\sherh{We consider the representation of the free group 
$$
\rho: \langle u,v  \rangle \to \mathsf{GL}(\Z),
\quad u\to \left(\begin{matrix}1&0\\0&-1\end{matrix}\right),
\quad v\to \left(\begin{matrix}1&-1\\1&0\end{matrix}\right),
\quad uv\to \left(\begin{matrix}1&-1\\-1&0\end{matrix}\right),
$$
For the eigenvalues one has
$$
\rho(u): \lambda_{1,2}=\pm 1, \quad \rho(v): \mu_{1,2}=\omega, -1/\omega, \quad \rho(uv):\nu_{1,2}=\bar\varphi,-1/\bar\varphi
$$
where $\omega=\frac{1+i\sqrt{3}}{2}$ and $\bar\varphi=\frac{1+\sqrt{5}}{2}$. For the Riemann scheme
$$
\left(\begin{matrix}0&1&\infty\\\rho_1&\sigma_1&\tau_1\\\rho_2&\sigma_2&\tau_2\end{matrix}\right)
$$
we have 
$$
\lambda_{1,2}=\exp{2\pi i \rho_1}, \exp{2\pi i \rho_2},
\quad 
\mu_{1,2}=\exp{2\pi i \sigma_1}, \exp{2\pi i \sigma_2},
\quad
\nu_{1,2}=\exp{2\pi i \tau_1}, \exp{2\pi i \tau_2}.
$$
Thus, up to an integer and permutations we have
$$
\rho_1=1, \rho_2=\frac12, \quad 
\sigma_1=\frac16, \sigma_2=-\frac16, \quad
\tau_1=\frac{\log \bar\varphi}{2\pi i}, \tau_2=-\frac{\log \bar\varphi}{2\pi i}+\frac12
$$
$$
P\left(\begin{matrix}
0&1&\infty\\
1&\frac16&\frac{\log \bar\varphi}{2\pi i}\\
\frac12&-\frac16&\frac12-\frac{\log \bar\varphi}{2\pi i}
\end{matrix}\right)(x)=
x(x-1)^\frac16
P\left(\begin{matrix}
0&1&\infty\\
0&0&\frac76+\frac{\log \bar\varphi}{2\pi i}\\
-\frac12&-\frac13&\frac53-\frac{\log \bar\varphi}{2\pi i}
\end{matrix}\right)(x)
$$
A bit simplified
$$
x(x-1)^\frac16
P\left(\begin{matrix}
0&1&\infty\\
0&0&\frac16+\frac{\log \bar\varphi}{2\pi i}\\
\frac12&\frac23&\frac23-\frac{\log \bar\varphi}{2\pi i}
\end{matrix}\right)(x) \to 
P\left(\begin{matrix}
0&1&\infty\\
0&0&\alpha\\
1-\gamma&\gamma-\alpha-\beta&\beta
\end{matrix}\right)
$$
Hence $\alpha=\frac16+\frac{\log \bar\varphi}{2\pi i}$, $\beta=\frac23-\frac{\log \bar\varphi}{2\pi i}$, $\gamma=\frac12$.
The corresponding Gauss hypergeometric equation is
$$
x(x-1)\frac{d^2u}{dx^2}+\bigl\{\gamma-(\alpha+\beta+1)x\bigr\}\frac{du}{dx}-\alpha\beta u=0
$$
$$
x(x-1)\frac{d^2u}{dx^2}+\left(\frac12-\frac{11}{6}x\right)\frac{du}{dx}-\alpha\beta u=0
$$
which admits the Frobenius solution
$$
F(\alpha, \beta, \gamma, x)=\sum_{m=0}^\infty \frac{(\alpha)_m(\beta)_m}{(\gamma)_m(1)_m}x^m
$$
}

\sherhh{
$$
\cds'(x):=2\con(x)\con'(x)-\con'(x+1)\con(x-1)-\con(x+1)\con'(x-1)
$$
$$
\cds'(x):=2\con(x)\con'(x)-\con'(x+1)\con(x-1)-\con(x+1)\con'(x-1)
$$
$$
=2\con^2(x)\num(1/x)^2-\con(x+1)\con(x-1)\num(\frac{1}{x+1})^2-\con(x+1)\con(x-1)\num(\frac{1}{x-1})^2
$$
$$
=2\con^2(x)\num(1/x)^2-\con(x+1)\con(x-1)(\num(\frac{1}{x+1})^2+\num(\frac{1}{x-1}))^2
$$
$$
=2(\con^2(x)-\con(x+1)\con(x-1))\num(1/x)^2=2\cds(x)\num(1/x)^2
$$
$$
\implies \cds'(x)=2\cds(x)\num(1/x)^2 \implies \cds(x)=\exp(2\int \num(1/x)^2)
$$
$$
\implies \cds'(1+x)=2\cds(x+1)\num(1/(x+1))^2=-2\cds(x)\num(1/x)^2=-\cds'(x)
$$
$$
\implies \cds'(1+x)=-\cds'(x)
$$
Can we get new Fibonacci identities this way?
}

\begin{acknowledgements}
This work is supported by the T\"{U}B\.{I}TAK grant 115F412 and the Galatasaray University research grant ****.
We are grateful to Masaaki Yoshida for some fruitful discussions.  We are particularly grateful to the editor of {\it Fibonacci Quarterly} for rejecting the paper and gaining us invaluable time to improve it considerably.

\end{acknowledgements}



\vfill

\newpage
\renewcommand{\arraystretch}{1}

\noindent {\bf Appendix I} $\con(k/41)$, $k=1,2,..200$.

\bigskip

{\small
$$
\begin{array}{|l|l|}\hline
1& 59369\!\times\!2789\\ \hline
2& 7\!\times\!2161\\ \hline 
3& 5\!\times\!7\!\times\!19\\ \hline 
4& 5\!\times\!67\\ \hline 
5& 11\!\times\!19\\ \hline 
6& 3\!\times\!5\!\times\!7\\ \hline 
7& 103\\ \hline 8& 
3^2\!\times\!23\\ \hline 
\textcolor{green}{9}& \textcolor{green}{31}\\ \hline 
10& 7^3\\ \hline
\bf{11}& \bf{17}\\ \hline 
12& 2^3\!\times\!3\\ \hline 
13& 5\!\times\!13\\ \hline 
14& 3\!\times\!281\\ \hline 
15& 23\\ \hline 
16& 19\\ \hline 
17& 29\\ \hline 
\bf{18}& \bf{17}\\ \hline 
19& 3^4\\\hline 
20& 89\!\times\!199\\ \hline 
\textcolor{red}{21}& \textcolor{red}{3\!\times\!5\!\times\!11\!\times\!41}\\ \hline 
\bf{22}& \textcolor{green}{31}\\ \hline 
23& 2\!\times\!3\\ \hline 
24& 2^2\!\times\!3\\ \hline 
25& 2^2\\\hline
26& 7\\ \hline 
27& 233\\ \hline 
28& 47\\ \hline 
\bf{29}& \bf{17}\\ \hline 
30& 13\\ \hline 
31& 199\\ \hline
\bf{32}& \bf{17}\\ \hline 
33& 131\\ \hline 	
34& 2\!\times\!3\!\times\!11\\ \hline 
35& 2^6\\ \hline
36& 3\!\times\!43\\ \hline 
37& 3^2\!\times\!23\\ \hline 
38& 3\!\times\!137\\ \hline 
39& 9349\\ \hline 
\textcolor{red}{40}& 
{\mbox{\tiny  \textcolor{red}{$3\!\times\!5\!\times\!7\!\times\!11\!\times\!41\!\times\!2161$}}}\\ \hline 
41& 1 \\ \hline 
42& 433494437\\ \hline 
43& 3\!\times\!43\!\times\!307\\ \hline 
44& 1741\\ \hline 
45& 877\\ \hline
46& 547\\ \hline 47& 2\!\times\!137\\ \hline 48& 2^4\!\times\!17\\ 
\hline 
49& 5
\!\times\!109\\ \hline 50& 79\\ \hline 
\end{array}
\begin{array}{|l|l|}\hline
51& 3\!\times\!5\!\times\!59\\ \hline 52& 47\\ \hline 53&
5\!\times\!13\\ \hline 54& 3\!\times\!59\\ \hline 55& 19\!\times\!101\\ \hline 56& 53\\ \hline 
57& 2\!\times\!3\!\times\!7\\ \hline 58& 2\!\times\!5\!\times\!7\\ \hline 59& 2^3\!\times\!5\\ \hline 
60& 193\\ \hline 61& 42187\\ \hline 62& 7\!\times\!4463\\ \hline 
63& 11\!\times\!13\\\hline 
 64& 29\\ \hline 65& 53\\ \hline 66& 3^3\\ \hline 67& 37\\ \hline 68& 
7\!\times\!11\!\times\!17\\ \hline 69& 3\!\times\!53\\ \hline 70& 2\!\times\!29\\ \hline 71& 43\\ \hline
72& 3\!\times\!13\!\times\!19\\ \hline 73& 5\!\times\!13\\ \hline 74& 7\!\times\!67\\ \hline 75
& 5\!\times\!47\\ \hline 76& 233\\ \hline 77& 467\\ \hline 78& 7\!\times\!107\\ \hline 79
& 1487\\ \hline 
\textcolor{red}{80}& \textcolor{red}{3\!\times\!5^2\!\times\!11\!\times\!41}\\ \hline 81& 370248451\\ \hline 82
& 2\\ \hline 83& 
\mbox{{\tiny{\tiny $2\!\times\!3^2\!\times\!83\!\times\!281\!\times\!1427$}}}\\ \hline 84& 2\!\times\!5\!\times\!
13\!\times\!421\\ \hline 85& 2\!\times\!3\!\times\!401\\ \hline 86& 2^2\!\times\!3\!\times\!101\\ \hline 
87& 2^2\!\times\!3^3\!\times\!7\\ \hline 88& 379\\ \hline 89& 3\!\times\!5^3\\ \hline 90& 
2^4\!\times\!47\\ \hline 91& 2\!\times\!5\!\times\!11\\ \hline 92& 2^2\!\times\!307\\ \hline 93& 
2^6\\ \hline 94& 89\\ \hline 95& 2\!\times\!11^2\\ \hline 96& 2\!\times\!1381\\\hline 97
& 2^2\!\times\!19\\ \hline 98& 61\\ \hline 99& 3^2\!\times\!11\\ \hline 100& 3\!\times\!
19\\ \hline
\end{array}
\begin{array}{|l|l|}
 \hline  101& 2\!\times\!137\\ \hline 102& 2\!\times\!3\!\times\!67\!\times\!149\\ \hline 103& 
2\!\times\!31\!\times\!613\\ \hline 104& 2\!\times\!3\!\times\!29\\ \hline 105& 5\!\times\!7\\ \hline 106&
5\!\times\!13\\ \hline 
\bf{107}& \textcolor{green}{31}\\ \hline 108& 2^2\!\times\!11\\ \hline 109& 2\!\times\!3\!\times\!
257\\ \hline 110& 2\!\times\!103\\ \hline 111& 3\!\times\!5^2\\\hline 112& 2^3\!\times\!7
\\ \hline 113& 2^2\!\times\!5\!\times\!47\\ \hline 
\textcolor{red}{114}& \textcolor{red}{2\!\times\!41}\\ \hline 115& 2^3\!\times\!3
\!\times\!5^2\\ 116& 7\!\times\!43\\ \hline 117& 3^3\!\times\!11\\ \hline 118& 2^2\!\times\!
149\\ \hline 119& 2^2\!\times\!239\\ \hline 120& 2\!\times\!13\!\times\!73\\ \hline 121& 2\!\times\!
21587\\ \hline 122& 2\!\times\!1109\!\times\!213067\\ \hline 123& 3\\ \hline 124& 5\!\times\!
23\!\times\!229\!\times\!39209\\ \hline 125& 17\!\times\!31\!\times\!179\\ \hline 126& 11\!\times\!
13\!\times\!29\\ \hline 127& 2089\\ \hline 128& 1303\\ \hline 129& 653\\ \hline 130& 
647\\ \hline 131& 1297\\ \hline 132& 3^3\!\times\!7\\ \hline 133& 2113\\ \hline 134
& 3\!\times\!37\\ \hline 135& 2\!\times\!7\!\times\!11\\ \hline 136& 419\\ \hline 137& 31
\!\times\!151\\ \hline 138& 3\!\times\!43\\ \hline 139& 103\\ \hline 140& 13^2\\ \hline 141&
97\\ \hline 142& 467\\ \hline 143& 5\!\times\!17\!\times\!1201\\ \hline 144& 69247\\ \hline
145& 317\\ \hline 146& 2^6\\ \hline
\textcolor{magenta}{147}& \textcolor{magenta}{2\!\times\!59}\\ \hline \textcolor{magenta}{148}& \textcolor{magenta}{2\!\times\!29}
\\ \hline 149& 3^4\\ \hline150& 2851\\ \hline
\end{array}
\begin{array}{|l|l|}\hline  151& 5\!\times\!73\\ \hline 152& 7\!\times\!
19\\ \hline 153& 3^2\!\times\!11\\ \hline 
\textcolor{red}{154}& \textcolor{red}{41^2}\\ \hline155& 3\!\times\!7^2\\ \hline 
156& 1069\\ \hline 157& 2^3\!\times\!67\\ \hline 158& 2\!\times\!5\!\times\!53\\ \hline 159
& 1063\\ \hline 160& 5\!\times\!11\!\times\!31\\ \hline 161& 5\!\times\!677\\ \hline 162& 
13\!\times\!5923\\ \hline 163& 72043\!\times\!11699\\ \hline 164& 5\\ \hline 165& 
1631643593\\ \hline 166& 23\!\times\!6481\\ \hline 167& 6553\\ \hline 168& 3301\\ \hline
169& 29\!\times\!71\\ \hline 170& 2^3\!\times\!3\!\times\!43\\ \hline 171& 2\!\times\!7\!\times\!
73\\ \hline 172& 3\!\times\!683\\ \hline 173& 13\!\times\!23\\ \hline 174& 13\!\times\!257\\ \hline
175& 5^2\!\times\!7\\ \hline 176& 3^5 \\ \hline177& 661\\ \hline 178& 3^2\!\times\!
827\\ \hline 
\textcolor{red}{179}& \textcolor{red}{5\!\times\!41}\\ \hline 
\textcolor{red}{180}& \textcolor{red}{2^2\!\times\!41}\\ \hline 181& 2^2\!\times\!67
\\ \hline 182& 2\!\times\!7\!\times\!11\\ \hline 183& 3\!\times\!13\!\times\!19\\ \hline 184& 
161983\\ \hline 185& 3^2\!\times\!17\!\times\!701\\ \hline 186& 491\\ \hline 187& 3^2\!\times\!
11\\ \hline 188& 3\!\times\!61\\ \hline 189& 89\\ \hline 190& 5^3\\ \hline 191& 
23\!\times\!191\\ \hline 192& 571\\ \hline 193& 2^4\!\times\!13\\ \hline 194& 5\!\times\!31
\\ \hline 195& 2621\\ \hline 196& 229\\ \hline 197& 1669\\ \hline 198& 3^3\!\times\!
31\\ \hline 199& 827\\ \hline 200& 3\!\times\!7\!\times\!79 \\\hline
\end{array}
$$}

\end{document}